\definecolor{vegasgold}{rgb}{0.77, 0.7, 0.35}
\definecolor{darkgoldenrod}{rgb}{0.72, 0.53, 0.04}
\definecolor{gold(metallic)}{rgb}{0.83, 0.69, 0.22}
\DeclareFontFamily{U}{wncy}{}
\DeclareFontShape{U}{wncy}{m}{n}{<->wncyr10}{}
\DeclareSymbolFont{mcy}{U}{wncy}{m}{n}
\DeclareMathSymbol{\Sh}{\mathord}{mcy}{"58}
\newtheorem{theorem}{Theorem}[section]
\newtheorem*{mainthm}{Main Theorem}
\newtheorem{lemma}[theorem]{Lemma}
\newtheorem{question}[theorem]{Question}
\newtheorem{conjecture}[theorem]{Conjecture}
\newtheorem{proposition}[theorem]{Proposition}
\newtheorem{corollary}[theorem]{Corollary}
\newtheorem{definition}[theorem]{Definition}
\numberwithin{equation}{section}
\theoremstyle{remark}
\newtheorem{remark}[theorem]{Remark}
\newcommand{\ord}{\mathrm{ord}}
\newcommand{\Z}{\mathbb{Z}}
\newcommand{\p}{\mathfrak{p}}
\newcommand{\Q}{\mathbb{Q}}
\newcommand{\F}{\mathbb{F}}
\newcommand{\cL}{\mathcal{L}}
\newcommand{\cK}{\mathcal{K}}
\newcommand{\cO}{\mathcal{O}}
\newcommand{\op}[1]{\operatorname{#1}}
\newcommand\mtx[4] { \left( {\begin{array}{cc}
 #1 & #2 \\
 #3 & #4 \\
 \end{array} } \right)}
\begin{document}
\title[Topological Iwasawa invariants and Arithmetic Statistics]{Topological Iwasawa invariants and Arithmetic statistics}

\author[Cédric Dion]{Cédric Dion}
\address[C.~Dion]{D\'epartement de Math\'ematiques et de Statistique, Universit\'e Laval, Pavillion Alexandre-Vachon, 1045 Avenue de la M\'edecine, Qu\'ebec, QC, Canada G1V 0A6
}
\email{cedric.dion.1@ulaval.ca}

\author[Anwesh Ray]{Anwesh Ray}
\address[A.~Ray]{Department of Mathematics\\
University of British Columbia\\
Vancouver BC, Canada V6T 1Z2}
\email{anweshray@math.ubc.ca}

\begin{abstract}
Given a prime number $p$, we study topological analogues of Iwasawa invariants associated to $\Z_p$-covers of the $3$-sphere that are branched along a link. We prove explicit criteria to detect these Iwasawa invariants, and apply them to the study of links consisting of $2$ component knots. Fixing the prime $p$, we prove statistical results for the average behaviour of $p$-primary Iwasawa invariants for $2$-bridge links that are in Schubert normal form. Our main result, which is entirely unconditional, shows that the density of $2$-bridge links for which the $\mu$-invariant vanishes, and the $\lambda$-invariant is equal to $1$, is $(1-\frac{1}{p})$. We also conjecture that the density of $2$-bridge links for which the $\mu$-invariant vanishes is $1$, and this is significantly backed by computational evidence. Our results are proven in a topological setting, yet have arithmetic significance, as we set out new directions in arithmetic statistics and arithmetic topology. 
\end{abstract}

\subjclass[2020]{11R23, 57K10, 57K14}
\keywords{Arithmetic statistics, arithmetic topology, topological Iwasawa invariants, knot theory, analogies between number theory and topology.}

\maketitle
 
\section{Introduction}
\par Let $p$ be a fixed odd prime number and $K$ be a number field. Iwasawa studied growth patterns of $p$-primary class groups in certain towers of number field extensions of $K$. More precisely, a $\Z_p$-extension of $K$ is a Galois extension $K_\infty$ of $K$ with Galois group $\op{Gal}(K_\infty/K)\simeq \Z_p$. For any positive integer $n$, let $K_n\subset K_\infty$ be the unique extension of $K$ with $[K_n:K]$, and let $\op{Cl}_p(K_n)$ be the $p$-Sylow subgroup of the class group of $K_n$. Iwasawa \cite{iwasawa1973zl} showed that for $n\gg0$,
\begin{equation}\label{Iwasawa first formula}\# \op{Cl}_p(K_n)=p^{p^n \mu+n \lambda+\nu},\end{equation} where $\mu, \lambda\in \Z_{\geq 0}$ and $\nu\in \Z$ are Iwasawa invariants associated to the $\Z_p$-extension $K_\infty/K$. The characterization of Iwasawa invariants is the subject of much mystery and speculation. Letting $K_{\op{cyc}}/K$ be the \emph{cyclotomic $\Z_p$-extension}, the associated $\mu$ and $\lambda$ Iwasawa invariants shall (in this section) be denoted $\mu_p(K)$ and $\lambda_p(K)$. 
\par Our primary motivation is to study the statistical distribution of such invariants. Given a fixed prime number $p$, one is interested in the average behavior of $\mu_p(K)$ and $\lambda_p(K)$. More precisely, we fix $n>0$, and let $\mathcal{F}$ be an infinite family of number fields $K$ with $[K:\Q]=n$, ordered by absolute discriminant $|\Delta_K|$. For instance, when $n=2$, it is natural to consider the family of all imaginary quadratic fields, or the family of all real quadratic fields. Furthermore, it is natural in this context to impose splitting conditions on $p$. Consider the following question.
\begin{question}\label{main question}
Fix a prime number $p$ and $\mu, \lambda\in \Z_{\geq 0}$.
Given a family of number fields $\mathcal{F}$ of degree $n$, let $\mathcal{F}^{\mu, \lambda}$ be the subset of $\mathcal{F}$ consisting of number fields $K$ such that $\mu_p(K)=\mu$ and $\lambda_p(K)=\lambda$. Given a real number $x>0$, let $\mathcal{F}_{<x}$ be the subset of $\mathcal{F}$ of all $K$ such that $|\Delta_K|<x$ and set $\mathcal{F}^{\mu, \lambda}_{<x}:=\mathcal{F}^{\mu, \lambda}\cap \mathcal{F}_{<x}$. Then, what can be said about the proportion of number fields $K$ in $\mathcal{F}$ with $\mu_p(K)=\mu$ and $\lambda_p(K)=\lambda$, i.e., the limit
\begin{equation}\label{first equation}\mathfrak{d}(\mathcal{F}^{\mu, \lambda})=\lim_{x\rightarrow \infty} \frac{\# \mathcal{F}^{\mu, \lambda}_{<x}}{\# \mathcal{F}_{<x}}?\end{equation}
\end{question}
Note that the limit \eqref{first equation} need not exist, however, one may ask the same questions for the upper and lower limits, defined by replacing the limit by $\limsup$ and $\liminf$ respectively. Even for the simplest families of number fields, i.e. imaginary/real quadratic fields, such questions have proved difficult. On the other hand, the systematic study of arithmetic statistics for the Iwasawa invariants of elliptic curves was initiated by the second named author, and has gained considerable momentum in the following works: \cite{ray2021statistics1, ray2021statistics2, ray2021statistics3, ray2021arithmetic4, ray2021rank, ray2021arithmetic5,ray2021arithmetic6}.

\par The main goal of the present work is to study a topological analogue of question \ref{main question}, which has proven to be a more tractable problem. There is a deep analogy called \emph{arithmetic topology} between number fields and $3$-manifolds. This analogy asserts a mysterious parallel between primes in number fields and knots embedded in a $3$-manifold. As a result, there are many arithmetic phenomena for number fields that are reflected on the topological side and vice versa. This philosophy originates from observations made by Mazur in \cite{mazur1963remarks}, which he attributes to conversations with Mumford. For a modern account of various aspects of this analogy, we refer to \cite{morishita2011knots}. Guided by this analogy, Hillman, Matei and Morishita \cite{hillman2005pro} have defined analogues of Iwasawa invariants associated to a link $\cL$ embedded in a $3$-sphere. Such Iwasawa invariants are used to prove an asymptotic formula for the growth patterns of $p$-homology in a $\Z_p$-cover of $S^3$ that is branched along $\cL$. We refer to \cite[Theorem 5.1.7]{hillman2005pro} for the precise statement. This formula has a strong resemblance to the classical formula \eqref{Iwasawa first formula} due to Iwasawa. It is not difficult to prove that for any knot, the associated $\mu$ and $\lambda$ invariants vanish (see Lemma \ref{mu lambda for a knot}). Therefore, the study of Iwasawa invariants is of genuine interest only for links with at least $2$ knot components.
\par In this paper, we consider the family of links in $S^3$ with $2$ knot components. For such links, the $\lambda$-invariant is $\geq 1$. Such links are presented as \emph{2-bridge links}, and there is an explicit parametrization called the \emph{Schubert form}, which associates a rational number $a/b$ with $(a,b)=1$, $b$ even and $0<a<2b$ to an explicit presentation which is denoted $\cL_{a/b}$. With respect to counting function $\op{ht}(\cL_{a/b}):=\max\{a^2,b\}$, our main result shows that for a density of $(1-1/p)$ links, the $\mu$-invariant vanishes and the $\lambda$-invariant is equal to $1$. In greater detail, an integral vector $z=(z_1, z_2)$ is called admissible if $z_1 z_2\neq 0$ and $\op{gcd}(z_1, z_2)=1$. The choice of $z$ fixes a $\Z_p$-cover of the link complement $S^3\backslash \cL_{a/b}$. Associated with this cover are $\mu$ and $\lambda$-invariants, $\mu_{p, a/b,z}$ and $\lambda_{p,a/b,z}$. Let $\mathcal{N}$ be the set of all fractions $a/b$ with $(a,b)=1$, $b$ even, $0<a<2b$. For $x>0$, set $\mathcal{N}_{<x}$ to be the subset of $\mathcal{N}$ consisting of fractions $a/b$ such that $\op{ht}(\cL_{a/b})<x$. It is easy to see that $\mathcal{N}_{<x}$ is finite, we set $\#\mathcal{N}_{<x}$ to denote its cardinality. Given a subset $\mathcal{S}$ of $\mathcal{N}$, let $\mathcal{S}_{<x}:=\mathcal{S}\cap \mathcal{N}_{<x}$. We say that $\mathcal{S}$ has density $\mathfrak{d}(\mathcal{S})$ if the following limit exists
\[\mathfrak{d}(\mathcal{S}):=\lim_{x\rightarrow \infty} \frac{\# \mathcal{S}_{<x}}{\# \mathcal{N}_{<x}}.\] Denote by $\mathcal{S}^{p,z}$ the set of all fractions $a/b$ in $\mathcal{N}$ such that $\mu_{p,\cL_{a/b},z}=0$ and $\lambda_{p,\cL_{a/b},z}=1$. Our main result is as follows. 
\begin{mainthm}[Theorem \ref{main th}]
With respect to notation above, $\mathcal{S}^{p,z}$ has density given by $\mathfrak{d}(\mathcal{S}^{p,z})=1-1/p$.
\end{mainthm}
\par We stress that the above result entirely unconditional. Furthermore, in section \ref{s 6}, we provide computational evidence for the vanishing of the $\mu$-invariant for $100\%$ of links parametrized in this way, for $z=(1,1)$.

\par \emph{Organization:} In section \ref{s 2}, we outline the parallels between topology and arithmetic and make preliminary observations. Section \ref{s 3} is devoted to the study of the \emph{Alexander polynomial} as well as certain $p$-adic analogs. In section \ref{s 4}, we prove explicit criteria that relate \emph{linking numbers} to Iwasawa invariants. We state and prove our main results in section \ref{s 5} for the family of $2$-bridge links. Section \ref{s 6} provides examples of links with positive $\mu$-invariant (for the total linking number covering space), and computational data showing that one may expect that $\mu=0$ for $100\%$ (i.e., a density $1$ set) of $2$-bridge links.
\subsection{Acknowledgments}
The first named author's research is supported by the Canada Graduate Scholarships – Doctoral program from the Natural Sciences and Engineering Research Council of Canada. The authors would like to thank Antonio Lei for helpful comments.
\section{Parallels between topology and arithmetic}\label{s 2}
In this section, we introduce some preliminary notions and delineate the parallels between topology and arithmetic. This shall motivate our investigations in \emph{topological Iwasawa theory}, i.e., the Iwasawa theory of the homology of various covers of $S^3$ (the $3$-dimensional sphere), that are branched along a fixed link.

\par Before discussing the topological analog, we recall the classical Iwasawa theory of a number field. For a comprehensive account, the reader is referred to the original work of Iwasawa \cite{iwasawa1973zl}. Fix a prime number $p$ and a number field $K$. Let $\bar{K}$ be a choice of algebraic closure of $K$ and $\op{G}_K:=\op{Gal}(\bar{K}/K)$ the \emph{absolute Galois group} of $K$. Given a finite set of prime numbers $S$, let $K_S$ be the maximal extension of $K$ contained in $\bar{K}$ in which all primes $v\notin S$ are unramified. Set $\op{G}_{K,S}$ to denote the Galois group $\op{Gal}(K_S/K)$. A $\Z_p$-extension is a Galois extension $K_\infty/K$ such that $\op{Gal}(K_\infty/K)$ is isomorphic to $\Z_p$ as a topological group. Note that all primes $v\nmid p$ of $K$ are unramified in $K_\infty$. The cyclotomic $\Z_p$-extension $K_{\op{cyc}}/K$ is the unique $\Z_p$-extension contained in $K(\mu_{p^\infty})$. Given a $\Z_p$-extension $K_\infty$, let $K_n$ be the unique extension of $K$ which is contained in $K_\infty$, such that $[K_n:K]=p^n$. Denote by $\op{Cl}_p(K_n)$ the $p$-Sylow subgroup of the class group of $K_n$. Iwasawa proved the following asymptotic formula for the growth of the $p$-class groups
\[\# \op{Cl}_p(K_n)=p^{p^n \mu+n \lambda+\nu}\]for $n\gg 0$, where $\mu, \lambda\in \Z_{\geq 0}$, and $\nu\in \Z$ are Iwasawa invariants associated to $K_\infty/K$. Thus in particular, $\mu=\lambda=0$ if and only if $\#\op{Cl}_p(K_n)$ is bounded in $n$. For the cyclotomic $\Z_p$-extension $K_{\op{cyc}}/K$, it is conjectured by Iwasawa that the $\mu$-invariant of $K_{\op{cyc}}/K$ must vanish. This was proven by Ferrero and Washington \cite{ferrero1979iwasawa} in the specific case when $K$ is an abelian extension of $\Q$.
\par Given a $\Z_p$-extension $K_\infty/K$, set $X\left(K_\infty\right)$ to be the inverse limit of the $p$-primary class groups $\op{Cl}_p(K_n)$ with respect to norm maps. Note that $X\left(K_\infty\right)$ is a $\Z_p$-module as well as a module over $\Gamma=\op{Gal}(K_\infty/K)$. The Iwasawa algebra associated to $\Gamma$ is defined to be the inverse limit of finite group algebras $\widehat{\Lambda}:=\varprojlim_n \Z_p[\Gamma/\Gamma^{p^n}]$. Letting $\gamma$ be a topological generator of $\Gamma$, there is an isomorphism $\Z_p\llbracket X\rrbracket \simeq \widehat{\Lambda}$ which identifies the formal variable $X$ with $(\gamma-1)$. We note here that in most expositions in Iwasawa theory, it is customary to let $\Lambda$ denote the Iwasawa algebra, however, we follow the notation used in \cite{hillman2005pro} and \cite{morishita2011knots} in which $\Lambda$ is reserved for the Laurent series ring $\Z[t^{\pm1}]$.
\par Given a finitely generated and torsion $\widehat{\Lambda}$-module $M$, the structure theorem \cite[Theorem 13.12]{lawrence1997washington} states that there is a map of $\widehat{\Lambda}$-modules
\[
M^{\vee}\longrightarrow \left(\bigoplus_{i=1}^s \widehat{\Lambda}/(p^{\mu_i})\right)\oplus \left(\bigoplus_{j=1}^t \widehat{\Lambda}/(f_j(X)) \right)
\]
with finite kernel and cokernel. Here, $\mu_i>0$ and $f_j(X)$ is a distinguished polynomial (i.e., a monic polynomial with non-leading coefficients divisible by $p$).
The characteristic ideal of $M$ is defined to be up to a unit generated by
\[
 f_{M}(X) := p^{\sum_{i} \mu_i} \prod_j f_j(X)=p^\mu g_M(X),
\]
where $\mu=\sum_i \mu_i$ and $g_M(X)$ is the distinguished polynomial $\prod_j f_j(X)$. The quantity $\mu$ is the $\mu$-invariant, which is denoted $\mu_p(M)$, and the $\lambda$-invariant $\lambda_p(M)$ is the degree of $g_M(T)$. For ease of notation, we simply set $\mu_p(K_\infty/K):=\mu_p\left(X(K_\infty)\right)$ and $\lambda_p(K_\infty/K):=\lambda_p\left(X(K_\infty)\right)$. We let $K_{\op{cyc}}$ be the cyclotomic $\Z_p$-extension of $K$. Thus, the result of Ferrero and Washington states that if $K/\Q$ is abelian and $p$ is any prime, then, $\mu_p(K_\infty/K)=0$. We shall simply refer to the characteristic polynomial of $X(K_\infty)$ as the $p$-primary Iwasawa polynomial of $K_\infty/K$.

\par Our main focus will be to study the behavior of various topological analogues of such Iwasawa invariants that arise naturally in the context of knot theory. First, we explain the parallels between the Galois theory of number fields with restricted ramification, and the topological theory of coverings of $S^3$ that are branched along a link. Let $K$ be a number field and $\cO_K$ be the ring of integers. 

\par Class field theory is based on certain duality properties of the $1$-dimensional scheme $X=\op{Spec} \cO_K$. Let $\mathbb{G}_{m, X}$ be the \'etale sheaf on $X$, which associates to an \'etale cover $\varphi:\op{Spec} B\rightarrow X$, the group of multiplicative units $\mathbb{G}_{m, X}(\varphi)=B^{\times}$. Given a \emph{constructible sheaf} $\mathfrak{M}$ on $X$ (see \cite[p.146]{milne2006arithmetic} for the definition), set $\mathfrak{M}^\vee:=\underline{\op{Hom}}(\mathfrak{M}, \mathbb{G}_{m, X})$ (where $\underline{\op{Hom}}(\cdot, \cdot)$ is standard notation for the sheaf of homomorphisms, see \cite[Chapter 2]{hartshorne2013algebraic} or \emph{loc. cit.}). We denote by $\hat{H}^i(X, \mathfrak{M})$ the modified \'etale cohomology groups of $\mathfrak{M}$. The Artin-Verdier duality theorem (see \cite[Theorem 1.8]{milne2006arithmetic}) states that there is a canonical isomorphism $\hat{H}^3(X, \mathbb{G}_{m, X})\simeq \Q/\Z$ and a nondegenerate pairing of abelian groups
\[\hat{H}^i(X, \mathfrak{M}^\vee)\times \op{Ext}_X^{3-i}(\mathfrak{M}, \mathbb{G}_{m, X})\rightarrow \hat{H}^3(X, \mathbb{G}_{m, X})\xrightarrow{\sim} \Q/\Z.\]
The above pairing is analogous to the Poincare duality pairing for a compact oriented $3$-manifold $Y$ and foreshadows a deep resemblance between the arithmetic properties of a number ring $\cO_K$ and the topological properties of an oriented $3$-manifold $Y$.  

\par There is also a close analogy between embedded knots in $3$-manifolds and primes in number rings, which we briefly explain. For further details, the reader may refer to \cite[chapter 3]{morishita2011knots}. Given a finite field $\F$, the \'etale fundamental group of $\op{Spec}\F$ is given by \[\pi_1(\op{Spec} \F)=\op{Gal}(\bar{\F}/\F)\simeq \hat{\Z}.\] In fact, $\op{Spec} \F$ is the \'etale analogue of the Eilenberg-Maclane space $K(\widehat{\Z}, 1)$ (see \cite[p.87, p.365]{hatcheralgebraic} for the definition of this notion). On the topological side, since $\pi_1(S^1)\simeq \Z$ and $\pi_i(S^1)=0$ for $i\geq 2$, the circle $S^1$ is $K(\Z, 1)$. For every integer $n\geq 1$, there is a unique extension $\F'/\F$ of degree $n$ with $\op{Gal}(\F'/\F)\simeq \Z/n\Z$. On the topological side, there is a unique $\Z/n\Z$ cover $[n]:S^1\rightarrow S^1$. Identifying $S^1$ with the unit circle in $\mathbb{C}$, the covering map $[n]$ maps $z\mapsto z^n$. Thus the finite Galois covers of $\F$ are in correspondence with finite covers of $S^1$. 
\par Given a prime ideal $\p$ in a number ring $\cO_K$, let $\F_{\p}$ be the residue field at $\p$. Let $K_{\p}$ be the completion of $K$ at $\p$, and $\cO_{\p}$ the valuation ring in $K$. In the diagram
\begin{equation}\label{num field diagram}\op{Spec} K_{\p} \hookrightarrow \op{Spec} \cO_{\p}\hookleftarrow \op{Spec} \F_{\p},\end{equation} the map $\op{Spec}\F_{\p}\hookrightarrow \op{Spec} \cO_{\p}$ is induced by reduction mod-$\mathfrak{p}$ and is an \emph{\'etale homotopy equivalence} (see \cite{artin2006etale}). On the other hand, the map $\op{Spec} K_{\p} \hookrightarrow \op{Spec} \cO_{\p}$ is induced by the inclusion of $\cO_{\p}$ in $K_{\p}$.
\par On the topological side, the ambient $3$-manifold is assumed for simplicity to be $S^3$, and there would be significant work required to extend the results to links embedded in more general $3$-manifolds. We note that some authors do work in slightly greater generality, for instance, in \cite{ueki2017iwasawa}, results are proven for \emph{rational homology $3$-spheres}. We come to the definition of a \emph{knot} and more generally, a \emph{link}. We refer to \cite[Chapter 1]{kawauchi1996survey} and \cite[Chapter 1]{lickorish2012introduction} for a comprehensive treatment of some of the basic notions in knot theory, as well as definitions that are equivalent to that given below, which we have chosen since in our estimation, it is easiest to state. 

\begin{definition}
A knot (in $S^3$) is an embedding $\iota: S^1\hookrightarrow S^3$ which is locally flat, i.e., locally homeomorphic to the canonical embedding $\mathbb{R}\hookrightarrow \mathbb{R}^3$ sending $x$ to $(x,0,0)$. Two knots are equivalent if there is an isotopy $f_t: S^3\rightarrow S^3$, $0\leq t\leq 1$, taking one knot to the other. In order to simplify notation, we shall identify the embedding $\iota:S^1\hookrightarrow S^3$ with the $\cK:=\iota(S^1)$. A \emph{link} (in $S^3$) is a union of disjoint knots $\mathcal{L}=\cK_1\cup \dots \cup \cK_r$ in $S^3$.
\end{definition}
Let $\cK$ be a knot in $S^3$, fix a \emph{tubular neighbourhood} (see \cite[p.137, ll.7-11]{lee2000introduction} for the definition) $\mathcal{V}$ of $\cK$, and let $\partial \mathcal{V}$ be the boundary of $\mathcal{V}$. Note that $\partial\mathcal{V}$ is homotopic to $S^1\times \cK$. The diagram \eqref{num field diagram} is analogous to 
\[\partial \mathcal{V} \hookrightarrow \mathcal{V}\hookleftarrow \mathcal{K}.\]

Note that the inclusion $\mathcal{K}\hookrightarrow \mathcal{V}$ is a homotopy equivalence, and $\partial \mathcal{V}$ is homeomorphic to $S^1\times \mathcal{K}$. By abuse of notation, we identify $\mathcal{V}$ with the product $S^1\times \mathcal{K}$. Consider the composite of maps \[\xi: \pi_1(\partial \mathcal{V})\rightarrow \pi_1(\mathcal{V})\xrightarrow{\sim} \pi_1(\mathcal{K}).\] Given $a\in S^1$, we refer to a loop of the form $\{a\}\times \mathcal{K}$ in $\partial \mathcal{V}$ as a \emph{longitude} of $\mathcal{K}$, and we denote the associated homotopy class by $\ell_{\mathcal{K}}\in \pi_1(\partial \mathcal{V})$. Note that this homotopy class is independent of the choice of $a$. The image of $\ell_{\mathcal{K}}$ is a generator of $\pi_1(\mathcal{K})$, and for this reason, $\ell_{\mathcal{K}}$ is an analog of a Frobenius element $\sigma_{\p}\in \op{Gal}(\bar{K}_{\p}/K_{\p})$. The kernel of $\xi$ is referred to as the \emph{inertia group} of $\cK$. A \emph{meridian} $\tau_{\mathcal{K}}$ is a generator of the inertia group of the form $S^1\times \{b\}$. Note that this homotopy class is independent of the choice of point $b$. The fundamental group $\pi_1(\partial \mathcal{V})$ is the generated by $\ell_{\cK}$ and $\tau_{\cK}$, with the single relation $[\ell_{\cK}, \tau_{\cK}]=\ell_{\cK}\tau_{\cK}\ell_{\cK}^{-1} \tau_{\cK}^{-1}=1$.
\par Let $\mathcal{L}=\cK_1\cup \dots \cup \cK_r$ be an \emph{oriented link}, which is to say that each knot $\mathcal{K}_j$ is oriented. Given distinct knots $\mathcal{K}_i$ and $\mathcal{K}_j$, let $\ell_{i,j}$ be the \emph{linking number} of $\mathcal{K}_i$ and $\mathcal{K}_j$, we refer to \cite[p. 11]{kawauchi1996survey} for a precise definition. Let $X_{\cL}$ be the complement of $\cL$ in $S^3$, and let $x_0$ be a choice of base-point in $X_{\cL}$.
\begin{definition}
The \emph{link group} of $\cL$ is the fundamental group $G_{\cL}:=\pi_1(X_{\cL};x_0)$.
\end{definition} A comprehensive study of the analogies between fundamental groups and Galois groups of number fields is undertaken in \cite{volklein1996groups}. The \emph{link group} is the analogue of the group $\op{Gal}(K_S/K)$, where $S$ is a finite set of primes and $K_S$ is the maximal algebraic extension of $K$ which is unramified at primes $v\notin S$. The set of primes $S$ here is the set at which $K_S$ may ramify and likewise. On the other hand, $G_{\cL}$ is the deck group of $\pi_{\cL}:\widetilde{X}_{\cL}\rightarrow X_{\cL}$, the universal cover of $X_{\cL}$.

\section{The Alexander Module associated to a link in a $3$-sphere}\label{s 3}

\par The classical Alexander polynomial is an important topological invariant associated to a link in a $3$-sphere (or, more generally $3$-manifold). It was first introduced by Alexander \cite{alexander1928topological}. On the other hand, there is a $p$-adic version introduced by Hillman-Matei-Morishita \cite{hillman2005pro} which is widely regarded as the multi-variable topological analogue of the Iwasawa polynomial of $\Z_p$-extension. In this setting, one may define $\mu$, $\lambda$ and $\nu$ invariants, and these invariants give a formula for the asymptotic growth of the $p$-parts in the homology for the various finite covering spaces occuring in a chosen $\Z_p$-cover which is branched along $\cL$. In this section, we recall the notion of the completed Alexander polynomial and associated Iwasawa invariants, and explain the relationship with $p$-homology classes occurring in a $\Z_p$-cover.
\par Let $\cL$ be a link in $S^3$ consisting of component knots $\cK_1,\dots,\cK_r$, and denote the complement by $X_{\cL}:=S^3\backslash \cL$. For each knot $\mathcal{K}_i$, we let $x_i\in G_{\cL}$ the class generated by the meridian looping about $\cK_i$ starting and ending at the base point $x_0$. Note that this involves a choice of a small enough tubular neighbourhood of $\cK_i$, that is contained in $X_{\cL}$. We set $\pi_{\cL}^{\op{ab}}: \widetilde{X_{\cL}}^{\op{ab}}\rightarrow X_{\cL}$ to be the maximal abelian cover of $X_{\cL}$ and $G_{\cL}^{\op{ab}}$ the maximal abelian quotient of $G_{\cL}$. Then note that there is a natural isomorphism \[G_{\cL}^{\op{ab}}\simeq \op{Gal}\left(\widetilde{X_{\cL}}^{\op{ab}}/X_{\cL}\right).\]By the Hurewicz theorem, $G_{\cL}^{\op{ab}}$ is isomorphic to $H_1(X_{\cL};\Z)$. The homology group $H_1(X_{\cL};\Z)$ is generated by $t_i=x_i[G_{\cL},G_{\cL}]$, $i=1,\dots, r$, as a free $\Z$-module of rank $r$. Letting $\Lambda_r$ be the group algebra $\Z[G_{\cL}^{\op{ab}}]$, we find that $\Lambda_r$ is the Laurent polynomial ring
\[\Lambda_r=\Z[t_1^{\pm 1}, \dots, t_r^{\pm 1}].\] \begin{definition}\label{def alexander module}
Let $\cL$ be a link with $r$ components as above. The \emph{Alexander module} is the relative homology group $A_{\cL}=H_1(\widetilde{X_{\cL}}^{\op{ab}}, \mathcal{F}_{x_0}; \Z)$, where $\mathcal{F}_{x_0}$ is the fibre of $x_0$ in $\widetilde{X_{\cL}}^{\op{ab}}$. 
\end{definition}

Note that the Galois group $\op{G}_{\cL}^{\op{ab}}$ acts on $\widetilde{X_{\cL}}^{\op{ab}}$ by deck transformations, and this gives rise to action of $\op{G}_{\cL}^{\op{ab}}$ on $A_{\cL}$. As a result, the Alexander module and link modules are modules over $\Lambda_r$. In order to define the Alexander polynomial, we will first recall some prerequisite notions from commutative algebra.
\par Let $R$ be a noetherian unique factorization domain and $M$ a finitely generated $R$-module. Let $u_1,\dots, u_q$ be a finite set of generators of $M$ as an $R$-module, and $d_1:R^q\rightarrow M$ the surjective homomorphism sending the $i$-th coordinate generator $e_i=(0,\dots, 0, 1,0,\dots)$ to $u_i$. Let $d_2:R^\ell\rightarrow R^q$ be an $R$-module homomorphism such that $R^\ell$ surjects onto the kernel of $d_1$. Note that the maps fit into a right exact sequence 
\[R^\ell\xrightarrow{d_2} R^q\xrightarrow{d_1} M\rightarrow{0}.\] The map $d_2$ is represented by a matrix $A$. The $i$-th elementary ideal $E^{(i)}_R(M)$ (also called the $i$-th Fitting ideal) is the ideal generated by all determinants of $(q-i)\times (q-i)$ submatrices in $A$. The \emph{divisorial hull} of any ideal $I$ in $R$ is the intersection of all principal ideals containing $I$. We let $\Delta_R^{(i)}(M)$ be the divisorial hull of $E_R^{(i)}(M)$.
\par The maximal abelian extension $\pi_{\cL}^{\op{ab}}:\widetilde{X_{\cL}}^{\op{ab}}\rightarrow X_{\cL}$ contains various subextensions \[\pi_{\cL,z}:X_{\cL,z}\rightarrow X_{\cL}\] such that $\op{Gal}(X_{\cL, z}/X_{\cL})\simeq \Z$. Here, $z$ is an a choice of integral vector $z=(z_1,\dots, z_r)\in \Z^r$ such that 
\begin{itemize}
    \item $\op{gcd}(z_1,\dots, z_r)=1$,
    \item and $\prod_i z_i\neq 0$. 
\end{itemize} 
An integral vector satisfying the above conditions is said to be \emph{admissible}. Given an admissible $z$, define a surjective homomorphism $\sigma_z: H_1(X_{\cL};\Z)\rightarrow \Z$ sending $t_i$ to $z_i$ for $i=1,\dots , r$. Let $H_z$ be the kernel of $\sigma_z$. Note that there corresponds to $H_z$ a cover \[\pi_{\cL,z}:X_{\cL,z}\rightarrow X_{\cL}\] whose deck group is canonically identified with $\op{G}_{\cL}/H_z\simeq \Z$. This gives a family of $\Z$-covers, and this family is infinite if $r\geq 2$. Letting $\mathbf{1}=(1,\dots, 1)$, we call $X_{\cL,\mathbf{1}}$ the \emph{total linking number} covering space of $X_{\cL}$. For ease of notation we abbreviate $X_{\cL,z}$ to $X_z$, and $\pi_{\cL,z}$ to $\pi_z$.
\begin{definition}
The \emph{multivariable Alexander polynomial} associated to $\cL$ is defined as
\[\Delta_{\cL}(t_1,\dots, t_r)=\Delta_{\Lambda_r}^{(1)}(A_{\cL}).\]Let $z$ be an admissible integral vector. The \emph{reduced Alexander polynomial} associated to $X_z$ is defined in $\Lambda_1\simeq \Z[\op{Aut}(X_z/X)]$ as 
\[\Delta_{L,z}(t)=\Delta_{\Lambda_1}^{(0)}\left(H_1(X_z;\Z)\right)=\Delta_{\cL} (t^{z_1},\dots, t^{z_r}).\]
\end{definition}

Following \cite{hillman2005pro}, we now define the completed Alexander module. The prime $p$ is fixed and will be suppressed in much of our notation. We let $\widehat{G_{\cL}}$ be the pro-$p$ completion of $G_{\cL}$ defined as the inverse limit
\[\widehat{G_{\cL}}:=\varprojlim_N G_{\cL}/N\]where $N$ ranges over all normal subgroups of $G_{\cL}$ with finite $p$-power index. We enumerate the knot components in $\cL$ by $\cK_1,\dots, \cK_r$ and let $x_i,y_i\in \widehat{G_{\cL}}$ be the elements corresponding to the meridian and longitude around $\cK_i$ respectively. Then Milnor shows in \cite{milnor1957isotopy} that there is a presentation of the form
\[\widehat{G_{\cL}}=\langle x_1,\dots, x_r\mid [x_i,y_i]=1 \text{ for }i=1,\dots, r \rangle. \]Given a pro-$p$ group $\mathcal{G}$, the associated $p$-adic Iwasawa algebra is the inverse limit 
\[\widehat{\Lambda}(\mathcal{G}):=\varprojlim_\mathcal{N} \Z_p[\mathcal{G}/\mathcal{N}],\] where $\mathcal{N}$ ranges over all finite index normal subgroups of $\mathcal{G}$. Set $\widehat{H}:=\Z_p^r$, the associated Iwasawa algebra is denoted $\widehat{\Lambda}_r$, and it is easy to show that $\widehat{\Lambda}_r$ is isomorphic to the formal power series ring $\Z_p\llbracket X_1,\dots, X_r\rrbracket$. Here, we choose a basis $\gamma_1, \dots \gamma_r$ of $\widehat{H}$ and let $X_i$ be the formal variable which corresponds to $(\gamma_i-1)$. Consider the quotient $\widehat{G_{\cL}}\rightarrow \widehat{H}$ given by sending $x_i$ to $\gamma_i$, which in turn shows that there is a quotient $\Lambda(G_{\cL})\rightarrow \widehat{\Lambda}_r$ mapping $x_i$ to $\gamma_i$. This map is well defined since the relations $[x_i,y_i]$ must map to $0$ as $\widehat{H}$ is abelian. 
\par We let $F$ be the (non-abelian) free group in $r$-generators $x_1,\dots, x_r$ and $\widehat{F}$ its pro-$p$ completion. Denote by $\epsilon: \Lambda(\widehat{F})\rightarrow \Z_p$ the augmentation map, sending every group-like element to $1$. It follows from the work of Ihara \cite{ihara1986galois} that the Fox differential calculus extends to $\widehat{F}$. One may define $\Z_p$-homomorphisms for $i=1,\dots, r$,
\[\partial_i=\frac{\partial}{\partial x_i}: \Lambda(\widehat{F})\rightarrow \Lambda(\widehat{F})\] such that any element $\alpha\in \Lambda(\widehat{F})$ can be uniquely expressed as a sum 
\[\alpha=\epsilon(\alpha)+\sum_{i=1}^{r}\partial_i(\alpha) (x_i-1).\] The derivatives of higher order are defined inductively as follows. Given a sequence $I=(i_1,\dots, i_m)$ with $1\leq i_j\leq r$, set 
\[\partial_I:=\partial_{i_1}\dots \partial_{i_m} (\alpha).\] We refer to \cite[section 2]{hillman2005pro} for the basic properties of $\partial_I$.
\par Denote by $\Z_p\langle X_1,\dots, X_r\rangle $ the formal non-commuting power series in $\Z_p$. An element of $\Z_p\langle X_1,\dots, X_r\rangle $ is represented as a formal sum of monomials $X_I=X_{i_1}X_{i_2}\dots X_{i_n}$, where $I=(i_1,\dots ,i_n)$. For $m\geq 1$, let $I(m)$ be the ideal in  $\Z_p\langle X_1,\dots, X_r\rangle $ which is  generated by all monomials $X_{i_1}\dots X_{i_m}$. Let $\Z_p\langle \langle X_1,\dots, X_r\rangle \rangle$ be the formal power series ring in $n$ noncommuting variables, defined as the completion
\[\Z_p\langle \langle X_1,\dots, X_r\rangle \rangle:=\varprojlim_m \left(\frac{\Z_p\langle X_1,\dots, X_r \rangle}{I(m)}\right).\]The pro-$p$ Magnus embedding $M:\widehat{F}\rightarrow \Lambda(\widehat{F})$ is defined by
\[M(x_i)=1+X_i\text{ and }M(x_i^{-1})=\sum_{j=0}^\infty (-1)^j X_i^j.\]
\par Denote by $\hat{\psi}:\Lambda(\widehat{G}_{\cL})\rightarrow \widehat{\Lambda}_r=\Z_p\llbracket X_1,\dots, X_r\rrbracket$ the natural quotient map. Following \cite[section 3]{hillman2005pro}, the Alexander matrix is the $r\times r$ matrix $\widehat{P}_{\cL}:=\left(\hat{\psi}(\partial_j[x_i,y_i])\right)$ and the completed Alexander module over $\widehat{\Lambda}_r$ is denoted $\widehat{A}_{\cL}$ and is defined to be the cokernel of the map $\widehat{\Lambda_r}^r\xrightarrow{\widehat{P}_{\cL}}\widehat{\Lambda_r}^r$. Let $\widehat{\Delta}_{\cL}$ be the greatest common divisor of all determinants of $(r-1)\times (r-1)$ minors of $\widehat{P}_{\cL}$. Note that $\widehat{\Delta}_{\cL}$ is well defined up to multiplication by a unit in $\widehat{\Lambda_r}$.
\par Recall that $\widehat{\Lambda}$ is the Iwasawa algebra $\widehat{\Lambda}:=\Z_p\llbracket X\rrbracket$ and let $\tau_z:\widehat{\Lambda}_r\rightarrow \widehat{\Lambda}$ be the reducing homomorphism defined by $\tau(X_i)=(1+X)^{z_i}-1$. Also set $\tau:=\tau_{\mathbf{1}}$. The reduced completed Alexander module is defined by $\widehat{A}_{\cL}^{\op{red}}:=\widehat{A}_{\cL}\hat{\otimes}_{\widehat{\Lambda}_r} \widehat{\Lambda}$, which is presented by $\tau(\widehat{P}_{\cL})$, and set
\[\tau(\widehat{\Delta}_{\cL}):=\widehat{\Delta}_{\cL}(X,X,\dots, X).\] More generally, set $\widehat{A}_{\cL}^{z}:=\widehat{A}_{\cL}\hat{\otimes}_{\widehat{\Lambda}_r, \tau_z} \widehat{\Lambda}$
and 
\[\widehat{\Delta}_{\cL,z}:=\tau_z(\widehat{\Delta}_{\cL})=\widehat{\Delta}_{\cL}\left((1+X)^{z_1}-1,(1+X)^{z_2}-1,\dots, (1+X)^{z_r}-1\right).\]
The relationship between the completed Alexander polynomial $\widehat{\Delta}_{\cL,z}\in \Z_p\llbracket X\rrbracket$ and the classical Alexander polynomial ${\Delta}_{\cL,z}\in \Z_p[t^{\pm 1}]$ is given by 
\[\widehat{\Delta}_{\cL,z}(X)= {\Delta}_{\cL,z}(1+X),\] where we plug $(1+X)$ in place of $t$ (see \cite[(3.1.6) and (3.1.7)]{hillman2005pro}). Note that this equality is up to a unit in $\widehat{\Lambda}$. Assume for the moment that $\widehat{\Delta}_{\cL,z}(X)\neq 0$. By the Weierstrass preparation theorem, we may write \begin{equation}\label{WPT}\widehat{\Delta}_{\cL,z}(X)=p^{\mu} P(X)u(X),\end{equation} where $\mu\in \Z_{\geq 0}$, $P(X)$ is a monic polynomial whose non-leading coefficients are divisible by $p$, and $u(X)$ is a unit in $\Z_p$. The $\mu$-invariant associated with $\widehat{\Delta}_{\cL,z}(X)$ is the number $\mu$ that appears in \eqref{WPT}, and the $\lambda$-invariant is the degree of the distinguished polynomial $P(X)$. We shall denote these $\mu$ and $\lambda$-invariants by $\mu_{\cL,z}$ and $\lambda_{\cL, z}$ respectively. On the other hand, if $\widehat{\Delta}_{\cL,z}(X)= 0$, we set $\mu_{\cL,z}=\lambda_{\cL, z}=\infty$. It is indeed possible for the multivariable Alexander polynomial itself to be zero, for instance, for \href{http://katlas.org/wiki/L11n244}{L11n244}, which is a link with $2$ knot components.

\par Note that in \cite{hillman2005pro}, the $\mu$ and $\lambda$-invariants are associated to the \emph{Hosokawa polynomial} and not the Alexander polynomial. The only difference is that the choice of $\lambda_{L,z}$ is $(r-1)$ more than the $\lambda$-invariant of the Hosokawa polynomial. This is only a matter of convention, we state well-known results essentially due to Mayberry-Murasugi \cite{mayberry1982torsion}, Hillman-Matei-Morishita (for $z=\mathbf{1}$) \cite[Theorem 5.1.7]{hillman2005pro} and later adapted to a slightly more general case (for general $z$) by Kadokami-Mizusawa (see \cite[Theorem 2.1]{kadokami2008iwasawa}). For the rest of this section, we shall recall these results.
\par Following the notation and conventions of \cite[section 2]{kadokami2013iwasawa}, recall that the choice of $z$ gives rise to an infinite cyclic cover $\pi_z: X_z\rightarrow X$. Given $n\in \Z_{\geq 0}$ denote by $\pi_z^n: X_{z,p^n}\rightarrow X$ the unique $\Z/p^n\Z$-subcover of $\pi_z$. Let $\xi_{ z}^n:M_{z,p^n}\rightarrow S^3$ be the \emph{Fox completion} of $\pi_{ z}^n$ (see \cite[section 10.2]{morishita2011knots} for further details). Note that $\xi_{ z}^n$ is branched along $\cL$. Given a $\Z$-module $A$, set $|A|$ to denote the cardinality of $A$ if it is finite and $0$ if $A$ is infinite. 
\begin{theorem}
Let $z$ be an admissible integral vector and assume that $\widehat{\Delta}_{\cL,z}\neq 0$. Let $v=v(z)$ be the maximum of $v_p(z_i)$, where $v_p$ is the valuation normalized by $v_p(p)=1$. We have the following:
\begin{enumerate}
    \item $|H_1(M_{z,p^n};\Z)|=|H_1(M_{z,p^v};\Z)|\times \left( \prod_{\zeta^{p^n}=1,\\
    \zeta^{p^v}\neq 1,} |\Delta_{\cL,z}(\zeta)|\right)$ for all $n\geq v$.
    \item If $|H_1(M_{z,p^n};\Z)|\neq 0$, then, there is an integer $v_{\cL,z}$ such that 
    \[|H_1(M_{z,p^n};\Z)|=p^{\left(p^n\mu_{\cL, z}+n\lambda_{\cL, z}+\nu_{\cL,z}\right)}\] for $n$ large enough.
\end{enumerate}
\end{theorem}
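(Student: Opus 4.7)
The plan is to treat the two parts separately: Part (1) is a topological identity that flows from the Mayberry--Murasugi formula for orders of $H_1$ of cyclic branched covers, while Part (2) is a standard Iwasawa--theoretic consequence of Weierstrass preparation applied to $\widehat{\Delta}_{\cL,z}(X)$.

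For Part (1), the key input is the Mayberry--Murasugi formula, which for each cyclic branched cover $\xi_z^n: M_{z,p^n}\to S^3$ expresses $|H_1(M_{z,p^n};\Z)|$ (up to the standard correction at the trivial character) as a product of $|\Delta_{\cL,z}(\zeta)|$ over $p^n$-th roots of unity $\zeta\neq 1$. The subtlety introduced by a general admissible $z$ is captured by the integer $v=v(z)$: for $\zeta$ with $\zeta^{p^v}=1$ the evaluation $\Delta_{\cL,z}(\zeta)$ degenerates, because the cover $M_{z,p^n}$ fails to be ``truly cyclic'' of degree $p^n$ along the directions where $v_p(z_i)$ is large. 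Applying Mayberry--Murasugi at both levels $n$ and $v$ and dividing the two formulas, the degenerate contribution cancels, leaving the product over $\zeta^{p^n}=1$ with $\zeta^{p^v}\neq 1$; this is the reduction carried out by Kadokami--Mizusawa, whose argument I would follow, verifying the cancellation on the level of the Reidemeister torsion that underlies Mayberry--Murasugi.

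For Part (2), I would begin with Weierstrass preparation $\widehat{\Delta}_{\cL,z}(X)=p^{\mu_{\cL,z}} P(X) u(X)$ and use the identity $\Delta_{\cL,z}(\zeta)=\widehat{\Delta}_{\cL,z}(\zeta-1)$. For $\zeta$ a primitive $p^k$-th root of unity, each factor is analyzed separately: $p^{\mu_{\cL,z}}$ contributes $\mu_{\cL,z}$ to the $p$-adic valuation, $u(\zeta-1)$ is a unit and contributes $0$, and for $k$ exceeding the maximal Newton slope of $P$ the roots $\alpha_j$ of $P$ satisfy $v_p(\alpha_j)>v_p(\zeta-1)=1/\phi(p^k)$, so that $v_p(P(\zeta-1))=\lambda_{\cL,z}/\phi(p^k)$. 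Taking the norm from $\Q(\zeta)$ to $\Q$ multiplies this by $\phi(p^k)$, giving a total $p$-adic valuation of $\mu_{\cL,z}\phi(p^k)+\lambda_{\cL,z}$ per level $k$. Summing over $v<k\leq n$ and invoking $\sum_{k=v+1}^n \phi(p^k)=p^n-p^v$, and then combining with Part (1), yields $p^n\mu_{\cL,z}+n\lambda_{\cL,z}+\nu_{\cL,z}$, where $\nu_{\cL,z}:=v_p(|H_1(M_{z,p^v};\Z)|)-\mu_{\cL,z} p^v - \lambda_{\cL,z} v$.

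The main obstacle will be the careful verification of Part (1) for general admissible $z$: the Fox completion of a cover that is not genuinely cyclic of degree $p^n$ introduces branching behavior that must be explicitly reconciled with the presentation of $\widehat{A}_{\cL}^z$, and ensuring the precise cancellation asserted requires tracking the relationship between the Alexander matrix $\widehat{P}_{\cL}$, its reduction $\tau_z(\widehat{P}_{\cL})$, and the torsion of the associated chain complex. A secondary technical point in Part (2) is that the displayed equality $|H_1|=p^{\cdots}$ must be interpreted in terms of the $p$-primary component (equivalently, the $p$-adic valuation computed above), and the asymptotic form $p^n\mu+n\lambda+\nu$ only sets in once $n$ is large enough that every root $\alpha_j$ of $P$ has strictly larger valuation than $1/\phi(p^n)$; making this threshold explicit is routine but must be carried out to justify ``for $n$ large enough''.
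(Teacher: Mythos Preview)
The paper does not actually prove this theorem: its entire proof consists of a single citation to the literature (the result is attributed to Mayberry--Murasugi, Hillman--Matei--Morishita for $z=\mathbf{1}$, and Kadokami--Mizusawa for general admissible $z$). Your proposal therefore goes well beyond what the paper does, by sketching the argument itself rather than deferring to a reference. The outline you give---Mayberry--Murasugi for Part~(1), with the Kadokami--Mizusawa cancellation to handle the degenerate roots when $v(z)>0$, followed by the standard Weierstrass-preparation valuation count for Part~(2)---is precisely the line of reasoning in those cited sources, so your approach is consistent with the literature the paper invokes. If you are writing up this result yourself, the points you flag as obstacles (the Fox-completion branching behaviour for non-primitive $z$, and the interpretation of $|H_1|$ via its $p$-part) are exactly the places where care is needed, and Kadokami--Mizusawa's paper is where you would find those details worked out.
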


\begin{proof}
The stated result is \cite[Theorem 2.1]{hillman2005pro}.
\end{proof}
Note that when $r=1$, the singleton tuple $z$ is forced to be equal to $1$. In the case of a knot, the Iwasawa invariants are well understood, as the following result shows.
\begin{lemma}\label{mu lambda for a knot}
Let $\cK$ be a knot. Then, the Iwasawa invariants are given by $\mu_{\cK, 1}=0$ and $\lambda_{\cK, 1}=0$.
\end{lemma}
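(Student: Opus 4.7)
The plan is to exploit the classical fact that for any knot $\cK \subset S^3$, the Alexander polynomial satisfies $\Delta_{\cK}(1) = \pm 1$. Once this is in hand, the claim follows essentially by inspection of the Weierstrass preparation \eqref{WPT}.

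First I would recall (or cite, e.g.\ from \cite{kawauchi1996survey} or \cite{lickorish2012introduction}) that $\Delta_{\cK}(1) = \pm 1$ for any knot. There are several equivalent ways to see this: for instance, the order of $H_1$ of the $n$-fold cyclic branched cover of $S^3$ along $\cK$ equals $\prod_{\zeta^n = 1, \zeta \ne 1} |\Delta_{\cK}(\zeta)|$, and specializing to $n = 1$ (where the branched cover is $S^3$ itself with trivial first homology) forces $\Delta_{\cK}(1) \in \{\pm 1\}$. Equivalently, this follows directly from the Fox-calculus presentation of the Alexander matrix combined with the fact that $H_1(X_{\cK};\Z) \simeq \Z$ is torsion-free.

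Next, since $r = 1$ and $z = 1$ is forced by admissibility, the reducing homomorphism $\tau_z$ sends $X_1 \mapsto X$, so that
\[
\widehat{\Delta}_{\cK,1}(X) \;=\; \widehat{\Delta}_{\cK}(X) \;=\; \Delta_{\cK}(1+X)
\]
up to a unit in $\widehat{\Lambda} = \Z_p\llbracket X\rrbracket$, using the identity recalled in the excerpt from \cite[(3.1.6), (3.1.7)]{hillman2005pro}. Evaluating at $X = 0$ gives $\widehat{\Delta}_{\cK,1}(0) = \Delta_{\cK}(1) = \pm 1$, which is a unit in $\Z_p$. Since a power series in $\Z_p\llbracket X\rrbracket$ is a unit if and only if its constant term is a unit, we conclude that $\widehat{\Delta}_{\cK,1}(X)$ is itself a unit in $\widehat{\Lambda}$.

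Finally I would apply the Weierstrass preparation \eqref{WPT}: writing $\widehat{\Delta}_{\cK,1}(X) = p^{\mu} P(X) u(X)$ with $P(X)$ distinguished and $u(X) \in \widehat{\Lambda}^\times$, the fact that the left-hand side is already a unit forces $\mu = 0$ and $P(X) = 1$, so $\lambda = \deg P = 0$. This yields $\mu_{\cK,1} = 0$ and $\lambda_{\cK,1} = 0$ as claimed. There is no real obstacle here; the only substantive input is the classical normalization $\Delta_{\cK}(1) = \pm 1$, after which the rest is a one-line application of Weierstrass preparation.
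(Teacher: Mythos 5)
Your proposal is correct and follows essentially the same route as the paper: both arguments rest on the classical fact that $\Delta_{\cK}(1) = \pm 1$, observe that the constant term of $\widehat{\Delta}_{\cK,1}$ is therefore a unit in $\Z_p$, conclude that $\widehat{\Delta}_{\cK,1}$ is a unit in $\widehat{\Lambda}$, and read off $\mu_{\cK,1}=\lambda_{\cK,1}=0$ from Weierstrass preparation. You simply spell out a few steps (why $\Delta_{\cK}(1)=\pm 1$, and the final Weierstrass bookkeeping) that the paper leaves implicit.
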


\begin{proof}
The above result is \cite[Theorem 2.2 (i)]{kadokami2013iwasawa}, we provide details here. As is well known, $\Delta_{\cK,\mathbf{1}}(1)=\Delta_{\cK}(1)=\pm 1$. Hence, $\widehat{\Delta}_{\cK,1}(0)=\pm 1$ is a unit in $\Z_p$. Therefore, $\widehat{\Delta}_{\cK,1}(X)$ is a unit in $\widehat{\Lambda}$, and hence, $\mu_{\cK, 1}=0$ and $\lambda_{\cK, 1}=0$.
\end{proof}

\section{Linking numbers and calculation of Iwasawa Invariants}\label{s 4}
\par In this section, we introduce various methods of detecting the Iwasawa invariants introduced in the previous section, and how they are related to \emph{$p$-adic Milnor invariants} introduced in \cite{hillman2005pro}. We shall describe the precise analogy with the number field case. Let $p$ be an odd prime and $K$ be an imaginary quadratic field in which $p$ splits, and consider the cyclotomic $\Z_p$-extension $K_{\op{cyc}}/K$. The splitting of $p$ creates a trivial zero for the $p$-adic L-function which in turn shows that the characteristic element in $\Z_p\llbracket X \rrbracket$ is divisible by $X$ (see the discussion on p.1 of \cite{sands1993non}). Therefore, in the case where $p$ splits in $K$, we have that $\lambda_p(K_{\op{cyc}}/K)\geq 1$. In this setting, Gold gives a criterion that distinguishes the case when $\lambda_p(K_{\op{cyc}}/K)=1$ from the case when $\lambda_p(K_{\op{cyc}}/K)>1$. This essentially is due a formula due to Federer, Gross and Sinnott \cite{federer1980regulators} for the leading (non-zero) term of the $p$-adic L-series which is expressed in terms of the $p$-part of the class group of $K$, and the $p$-adic regulator for the $\Z_p$-extension $K_{\op{cyc}}/K$. The result follows from work of Gold \cite{gold1974nontriviality} and Sands \cite{sands1993non}.

\begin{theorem}\label{thm of sands and others}
Let $p$ be an odd prime which is split in an imaginary quadratic field $K$. Write $p$ as a product $p\cO_K=\mathfrak{p}\mathfrak{p}^*$, where $\mathfrak{p}$ and $\mathfrak{p}^*$ are the primes of $K$ above $p$. The following conditions are equivalent:
\begin{enumerate}
    \item $\lambda_p(K_{\op{cyc}}/K)> 1$,
    \item either
    \begin{enumerate}
        \item $p$ divides the class number of $K$,
        \item or, $p$ does not divide the class number of $K$. Let $N$ be a positive integer not divisible by $p$ such that $\mathfrak{p}^N=\alpha \cO_K$, a principal ideal in $K$. Then, $\alpha^{p-1}\equiv 1\mod{(\mathfrak{p}^*)^2}$.
    \end{enumerate}
\end{enumerate}
\end{theorem}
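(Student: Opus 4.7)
The plan is to combine the Iwasawa main conjecture for imaginary quadratic fields with the Federer--Gross--Sinnott leading-term formula for the cyclotomic $p$-adic $L$-function, and to translate the resulting condition on the $p$-adic regulator into the elementary congruence in (2)(b).

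First, I would set up the algebraic side. By the main conjecture (Rubin), the characteristic power series $f_K(X) \in \Z_p\llbracket X\rrbracket$ of $X(K_{\op{cyc}})$ is, up to a unit, a Kubota--Leopoldt style $p$-adic $L$-function attached to the trivial character of $K$. Because $p = \mathfrak{p}\mathfrak{p}^*$ splits, this $L$-function carries a trivial zero at the point corresponding to $X = 0$, so $X \mid f_K(X)$, which already gives $\lambda_p(K_{\op{cyc}}/K) \geq 1$. Writing $f_K(X) = p^\mu g(X) u(X)$ in Weierstrass form, the vanishing at $0$ forces $g(0) = 0$, and $\lambda = 1$ is equivalent to $g(X) = X$, equivalently to the coefficient of $X$ in $f_K(X)/p^\mu$ being a $p$-adic unit. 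Hence $\lambda > 1$ if and only if this leading Taylor coefficient vanishes modulo $p$ (after dividing by $p^\mu$).

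Next, I would apply the Federer--Gross--Sinnott formula, which expresses this leading coefficient as
\[
\tfrac{d}{dX}\bigg|_{X=0} f_K(X) \;\equiv\; (\text{unit}) \cdot h_K \cdot R_p(K_{\op{cyc}}/K) \pmod{\text{$p$-adic units}},
\]
where $h_K$ is the class number of $K$ and $R_p(K_{\op{cyc}}/K)$ is the $p$-adic regulator attached to the cyclotomic $\Z_p$-extension. Thus $\lambda > 1$ is equivalent to $p \mid h_K \cdot R_p(K_{\op{cyc}}/K)$, which splits into the two alternatives: either (a) $p \mid h_K$, or (b) $p \nmid h_K$ and $p$ divides the regulator. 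Alternative (a) is already case (2)(a) of the theorem, so it remains only to identify (b) with the congruence on $\alpha$.

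Finally, I would unpack the regulator. Assuming $p \nmid h_K$, choose $N$ coprime to $p$ with $\mathfrak{p}^N = (\alpha)$; since $\alpha$ has no support at $\mathfrak{p}^*$ it is a $\mathfrak{p}^*$-adic unit. Up to $p$-adic units one has
\[
R_p(K_{\op{cyc}}/K) \;=\; \log_p\bigl(\iota_{\mathfrak{p}^*}(\alpha)\bigr),
\]
where $\iota_{\mathfrak{p}^*}:K \hookrightarrow K_{\mathfrak{p}^*} \cong \Q_p$. Because $\alpha^{p-1}$ lies in the principal units at $\mathfrak{p}^*$ and $\log_p(1+x) \equiv x \pmod{x^2}$, the divisibility $p \mid \log_p(\iota_{\mathfrak{p}^*}(\alpha))$ is equivalent to $\alpha^{p-1} \equiv 1 \pmod{(\mathfrak{p}^*)^2}$, which is condition (2)(b).

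The main obstacle is to pin down the Federer--Gross--Sinnott formula precisely enough in this setting: one must identify the correct normalization of the regulator attached to $K_{\op{cyc}}/K$ (as opposed to the anticyclotomic extension, where the analogous story is rather different) and verify that the auxiliary factors relating the analytic leading term to the algebraic characteristic power series are $p$-adic units under the splitting hypothesis. Once the normalization is fixed, the passage from $p \mid \log_p(\alpha)$ to $\alpha^{p-1} \equiv 1 \pmod{(\mathfrak{p}^*)^2}$ is essentially formal, and the rest of the argument is a straightforward assembly of the pieces.
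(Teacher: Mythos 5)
Your outline reconstructs exactly the argument the paper attributes to Federer--Gross--Sinnott and Sands; the paper itself simply cites \cite[Proposition 2.1]{sands1993non} without spelling out the proof, and your structure (trivial zero from the split prime, leading-term formula in terms of $h_K$ and a $p$-adic regulator, unpacking the regulator via $\log_p$ at $\mathfrak{p}^*$) is the route Sands takes. Two points deserve correction. First, the relevant Kubota--Leopoldt object is the $p$-adic $L$-function of the odd quadratic Dirichlet character $\chi_K$ cutting out $K$ (governing the minus part $X(K_{\op{cyc}})^-$ via Mazur--Wiles), not the trivial character; the plus part is $X(\Q_{\op{cyc}})$ and plays no role here, and one should also invoke Ferrero--Washington to know $\mu=0$ before reading off $\lambda$ from the linear coefficient. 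Second, and more substantively, the final equivalence is stated with the wrong power of $p$: since $\alpha^{p-1}\equiv 1\pmod{\mathfrak{p}^*}$ automatically by Fermat's little theorem, one always has $p\mid \log_p\bigl(\iota_{\mathfrak{p}^*}(\alpha)\bigr)$, so that condition is vacuous. The correct statement is that $\alpha^{p-1}\equiv 1\pmod{(\mathfrak{p}^*)^2}$ is equivalent to $p^2\mid \log_p\bigl(\iota_{\mathfrak{p}^*}(\alpha)\bigr)$; this is consistent with your own caveat about the regulator normalization, since the Federer--Gross--Sinnott regulator already absorbs one factor of $p$ (coming from the Euler factor responsible for the trivial zero), so that ``$p$ divides the regulator'' really means $p^2\mid\log_p(\alpha)$. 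With those adjustments the argument is sound and matches the source the paper relies on.
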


\begin{proof}
The result follows from \cite[Proposition 2.1]{sands1993non}.
\end{proof}

In \cite{horie1987note}, Horie shows that for any odd prime $p$, there are infinitely many imaginary quadratic fields $K$ in which $p$ splits such that $\lambda_p(K_{\op{cyc}}/K)>1$. On the other hand, Jochnowitz shows in \cite{jochnowitz1994p} that there are infinitely many $K$ in which $p$ splits such that $\lambda_p(K_{\op{cyc}}/K)=1$. One may formulate a more refined question guided by arithmetic statistics. Given an imaginary quadratic field $K$, and a real number $x>0$, let $D_K$ denote its discriminant and let $\mathcal{M}_{<x}^p$ be the set of all imaginary quadratic fields in which $p$ splits with $|D_K|<x$. Note that this is a finite set. Let $\mathcal{M}_{<x}'$ be the subset of $\mathcal{M}_{<x}$ consisting of imaginary quadratic fields for which $\lambda_p(K_{\op{cyc}}/K)=1$. Then, the upper (resp. lower density) of imaginary quadratic fields $K$ in which $p$ splits and $\lambda_p=1$ is taken to be $\limsup_{x\rightarrow \infty} \frac{\#\mathcal{M}_{<x}'}{\#\mathcal{M}_{<x}}$ (resp. $\liminf_{x\rightarrow \infty} \frac{\#\mathcal{M}_{<x}'}{\#\mathcal{M}_{<x}}$). If the limit $\lim_{x\rightarrow \infty} \frac{\#\mathcal{M}_{<x}'}{\#\mathcal{M}_{<x}}$ exists, then it is called the density. Let $\bar{\mathfrak{d}}_{\lambda=1}^p$, $\underline{\mathfrak{d}}_{\lambda=1}^p$ be the upper and lower density respectively, and note that if $\bar{\mathfrak{d}}_{\lambda=1}^p=\underline{\mathfrak{d}}_{\lambda=1}^p$, the density exists, and we denote it by $\mathfrak{d}_{\lambda=1}^p$.
\begin{question}
Let $p$ be a prime. With respect to notation above, what can be said about the quantities $\bar{\mathfrak{d}}_{\lambda=1}^p$ and $\underline{\mathfrak{d}}_{\lambda=1}^p$? Are they equal, and if so, what is $\mathfrak{d}_{\lambda=1}^p$?
\end{question}
\par Such questions seem to be difficult in the number field setting and are currently being pursued by the second named author. We note here that this is a very natural question, if fact, one can ask the same question for any choice of $\lambda\geq 1$.
\par In this paper, we shall study topological analogs of the above question in section \ref{s 5}. In this section, we shall prove a number of analogs for Theorem \ref{thm of sands and others} in the topological setting.
\par Let $\cL$ be a link with $r$ component knots $\cK_1, \dots, \cK_r$. Note that in the case when $r=1$, the $\mu$ and $\lambda$-invariants are known to vanish by Lemma \ref{mu lambda for a knot}, hence we study the case when $r\geq 2$. Recall that the Alexander module is presented by the $r\times r$ matrix $\widehat{P}_{\cL}$. Let $\widehat{F}$ be the free pro-$p$ group on $x_1,\dots, x_r$, where we recall that $x_i$ corresponds to the meridian for $\cK_i$, and let $y_i$ be the $i$-th longitude. Let $I=(i_1, \dots, i_n)$ be a multi-index, the \emph{$p$-adic Milnor number} is 
$\hat{\mu}(I):=\epsilon\left(\partial_{I'}(y_{i_n})\right)$, where $I'=(i_1,\dots, i_{n-1})$. Furthermore, when $n=1$, we set $\hat{\mu}(I)=0$. Given $\ell\geq 1$, and indices $i\neq j$, we have that 
\begin{equation}\label{mu and linking number}
    \hat{\mu}(\underbrace{i\cdots i}_{\ell} j)={\ell_{i,j}\choose \ell},
\end{equation}where we recall the $\ell_{i,j}$ is the linking number for the knots $\cK_i$ and $\cK_j$. In particular, $\hat{\mu}(ij)=\ell_{i,j}$. These Milnor invariants may also be defined in terms of Massey products in the cohomology of $\widehat{G}_{\cL}$, see \cite[p. 127]{hillman2005pro} for further details. 

\begin{definition}
The $p$-adic Traldi matrix $\widehat{T}_{\cL}$ over $\widehat{\Lambda}_r$ is defined by
\[\widehat{T}_{\cL}(i,j):=\begin{cases}
-\sum_{n\geq 1} \left(\sum_{1\leq i_1,\dots, i_n\leq r, i_n\neq i} \hat{\mu}(i_1\dots i_n i) X_{i_1}\dots X_{i_n}\right)&\text{ if }i=j,\\
\hat{\mu}(ij)X_i+\sum_{n\geq 1} \left(\sum_{1\leq i_1,\dots, i_n\leq r}\hat{\mu}(i_1\dots i_n ji)X_iX_{i_1}\cdots X_{i_n}\right)
&\text{ if }i\neq j.
\end{cases}\]
\end{definition}

\begin{theorem}
The \emph{$p$-adic Traldi matrix} $\widehat{T}_{\cL}$ coincides with the Alexander matrix $\widehat{P}_{\cL}$.
\end{theorem}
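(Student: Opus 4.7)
My plan is to compute $\hat{\psi}(\partial_j[x_i, y_i])$ directly from Fox calculus and match it against the defining formula for $\widehat{T}_{\cL}(i,j)$, using the pro-$p$ Magnus expansion as the bridge between iterated Fox derivatives and $p$-adic Milnor invariants.

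First, I would expand $\partial_j(x_i y_i x_i^{-1} y_i^{-1})$ via the Leibniz-type rule $\partial_j(uv) = \partial_j(u) + u\,\partial_j(v)$ together with $\partial_j(x_k) = \delta_{jk}$ and $\partial_j(x_k^{-1}) = -x_k^{-1}\delta_{jk}$. After applying $\hat{\psi}$ and simplifying using commutativity of $\widehat{\Lambda}_r$ (so that $\hat{\psi}(x_i y_i x_i^{-1}) = \hat{\psi}(y_i)$ and $\hat{\psi}([x_i, y_i]) = 1$) together with $\hat{\psi}(x_i) = 1 + X_i$, a short calculation collapses everything to the skeleton identity
\[
\hat{\psi}(\partial_j[x_i, y_i]) = \delta_{ij}\bigl(1 - \hat{\psi}(y_i)\bigr) + X_i\, \hat{\psi}(\partial_j y_i).
\]

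Next, iterating the Fox tautology $\alpha = \epsilon(\alpha) + \sum_k \partial_k(\alpha)(x_k - 1)$ and tracking coefficients through the Magnus embedding shows that, after passage to the commutative ring $\widehat{\Lambda}_r$,
\[
\hat{\psi}(y_i) = 1 + \sum_{n \geq 1}\sum_{(i_1, \ldots, i_n)} \hat{\mu}(i_1 \cdots i_n\, i)\, X_{i_1} \cdots X_{i_n},
\]
by the very definition of $\hat{\mu}$, with an analogous expansion for $\hat{\psi}(\partial_j y_i)$ having $j$ inserted just before the terminal $i$ in each Milnor symbol. In the off-diagonal case $i \neq j$, substituting this series for $\hat{\psi}(\partial_j y_i)$ and multiplying by $X_i$ immediately reproduces the definition of $\widehat{T}_{\cL}(i,j)$, so the identification is mechanical.

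The diagonal case $i = j$ is the main obstacle. Both the $(1 - \hat{\psi}(y_i))$ and the $X_i\hat{\psi}(\partial_i y_i)$ contributions produce overlapping monomials, and these must cancel so as to leave precisely the \emph{restricted} sum over tuples with $i_n \neq i$ appearing in $\widehat{T}_{\cL}(i,i)$. To execute the cancellation I would split the first sum according to whether $i_n = i$, use commutativity to rewrite $X_i X_{i_1} \cdots X_{i_n} = X_{i_1} \cdots X_{i_n} X_i$, and check that the $i_n = i$ portion annihilates the $X_i\hat{\psi}(\partial_i y_i)$ term monomial by monomial. The low-order discrepancy involving $\hat{\mu}(ii) = \epsilon(\partial_i y_i)$ vanishes because the longitude $y_i$ abelianizes to $\prod_{k \neq i} t_k^{\ell_{k,i}}$, which has no $t_i$-component. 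Once the convention pairing the order of Fox-derivative composition with the order of the variables $X_{i_k}$ is pinned down, the remaining sum is literally $\widehat{T}_{\cL}(i,i)$, completing the identification.
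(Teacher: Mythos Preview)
Your outline is correct and is essentially the standard Fox-calculus computation behind this identity; the paper, however, does not carry it out at all and simply cites \cite[Theorem~3.2.2]{hillman2005pro}. So there is nothing to compare against in the paper itself: you have supplied the argument that the paper outsources.

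One minor remark on the diagonal case. You flag a ``low-order discrepancy'' involving $\hat{\mu}(ii)=\epsilon(\partial_i y_i)$ and invoke the fact that the longitude has no $t_i$-component to kill it. In fact no such appeal is needed: the $n=1$, $i_1=i$ piece of $1-\hat{\psi}(y_i)$ contributes $-\hat{\mu}(ii)X_i$, while the constant term of $\hat{\psi}(\partial_i y_i)$ contributes $+\hat{\mu}(ii)X_i$ after multiplication by $X_i$, and these cancel on the nose regardless of the value of $\hat{\mu}(ii)$. The rest of your cancellation (matching the $i_n=i$ tail of $1-\hat{\psi}(y_i)$ against $X_i\hat{\psi}(\partial_i y_i)$ via commutativity) goes through exactly as you describe, leaving precisely the restricted sum over $i_n\neq i$ that defines $\widehat{T}_{\cL}(i,i)$.
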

\begin{proof}
The above result is \cite[Theorem 3.2.2]{hillman2005pro}.
\end{proof}
Let $z=(z_1,\dots, z_r)$ be such that $\prod_{i} z_i\neq 0$ and $\op{gcd}(z_1,\dots, z_r)=1$. Set $\widehat{T}_{\cL,z}$ to be the matrix with entries
\[\widehat{T}_{\cL,z}(i,j)=\tau_z\left(\widehat{T}_{\cL}(i,j)\right).\]
In particular, $\widehat{T}_{\cL,\mathbf{1}}$ is given by
\[\widehat{T}_{\cL,\mathbf{1}}(i,j):=\begin{cases}
-\sum_{n\geq 1} \left(\sum_{1\leq i_1,\dots, i_n\leq r, i_n\neq i} \hat{\mu}(i_1\dots i_n i) \right) X^n&\text{ if }i=j,\\
\hat{\mu}(ij)X+\sum_{n\geq 1} \left(\sum_{1\leq i_1,\dots, i_n\leq r}\hat{\mu}(i_1\dots i_n ji)\right)X^{n+1}
&\text{ if }i\neq j.
\end{cases}\]
Given $k\geq 2$, the truncated Traldi matrix $\widehat{T}_{\cL,z}^{(k)}$ is simply the mod $X^k$ reduction of $\widehat{T}_{\cL,z}$. We find that 
\[\widehat{T}_{\cL,z}^{(2)}:=\begin{cases}
\left(\sum_{t\neq i} -z_t\ell_{t,i} \right) X &\text{ if }i=j,\\
\ell_{i,j}z_iX
&\text{ if }i\neq j.
\end{cases}.\]
Here, we have used the relation $\hat{\mu}(ij)=\ell_{i,j}$, see \eqref{mu and linking number}. The \emph{linking matrix} $C_{\cL,z}$ is defined by 
\[C_{\cL,z}(i,j):=\begin{cases}
\left(\sum_{t\neq i} -z_t\ell_{t,i} \right) &\text{ if }i=j,\\
\ell_{i,j}z_i
&\text{ if }i\neq j,
\end{cases}\]
we find that $\widehat{T}_{\cL,z}^{(2)}$ is represented by $X C_{\cL,z}\mod{X^2}$.
\par Given $i, j$ such that $1\leq i,j\leq r$, let $M_{i,j}$ denote the $(r-1)\times (r-1)$ minor of $C_{\cL,z}$ obtained by deleting the $i$-th row and $j$-th column. Fix the absolute value $|\cdot |_p$ on $\Q_p$ normalized by $|p|_p=p^{-1}$ and set $|0|_p:=0$. Set $c_{\cL, z}$ to be the maximum of the values $|\op{det} M_{i,j}|_p$ as $i,j$ range over all values $1\leq i,j\leq r$. Note that $\op{rank}_{\Q_p} C_{\cL,z}<r-1$ if and only if all determinants $\op{det}M_{i,j}$ vanish, if and only if $c_{\cL,z}=0$. 

\begin{proposition}\label{prop on leading term}
Let $z=(z_1,\dots, z_r)$ be an admissible integral and $\cL$ a link with exactly $r$ components $\cK_1,\dots, \cK_r$. Then, the order of vanishing satisfies the lower bound
\[\op{ord}_{X=0} \widehat{\Delta}_{\cL, z}(X)\geq r-1.\] Write $\widehat{\Delta}_{\cL, z}(X)$ as a sum 
\[\widehat{\Delta}_{\cL, z}(X)=a_{r-1} X^{r-1}+a_r X^r+\dots, \]
where $a_{r-1}$ is the \emph{leading coefficient}. The following assertions hold:
\begin{enumerate}
    \item $\op{ord}_{X=0} \widehat{\Delta}_{\cL, z}(X)> r-1$ if and only if $\op{rank}_{\Q_p} C_{\cL,z}<r-1$.
    \item Suppose that $\op{rank}_{\Q_p} C_{\cL,z}=r-1$, then, $|a_{r-1}|_p= c_{\cL,z}$.
\end{enumerate}
\end{proposition}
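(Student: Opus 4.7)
The plan is to analyze the $(r-1)\times(r-1)$ minors of the Traldi matrix $\widehat T_{\cL,z}$, whose greatest common divisor is $\widehat\Delta_{\cL,z}$ up to a unit in $\widehat\Lambda = \Z_p\lb X \rb$. The starting observation is that every entry of $\widehat T_{\cL,z}$ lies in $X\widehat\Lambda$, since the Traldi expansion in terms of $p$-adic Milnor invariants gives $\widehat T_{\cL,z}(i,j) = X\, C_{\cL,z}(i,j) + O(X^2)$. Consequently each $(r-1)\times(r-1)$ minor $\delta^{z}_{i,j}$ lies in $X^{r-1}\widehat\Lambda$, with its $X^{r-1}$-coefficient equal to $\det M_{i,j}$ by multilinearity of the determinant.

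For part (1), since $\widehat\Lambda$ is a UFD in which $X$ is a prime element, the $X$-adic valuation satisfies $v_X(\gcd) = \min_{i,j} v_X(\delta^{z}_{i,j}) \geq r-1$, yielding the lower bound $\op{ord}_{X=0}\widehat\Delta_{\cL,z}\geq r-1$. Strict inequality holds if and only if every minor has $X$-order strictly greater than $r-1$, which is equivalent to every $\det M_{i,j}$ vanishing. Since the column sums of $C_{\cL,z}$ are zero, so that $\op{rank}_{\Q_p} C_{\cL,z}\leq r-1$ unconditionally, this is precisely the condition $\op{rank}_{\Q_p} C_{\cL,z}<r-1$.

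For part (2), the structure of $C_{\cL,z}$ becomes essential: its left kernel is spanned by $(1,\ldots,1)$ (column-sum relation) and its right kernel by $(z_1,\ldots,z_r)$, the latter obtained by linearizing in $X$ the Fox-calculus identity $\widehat T_{\cL,z}\cdot((1+X)^{z_1}-1,\ldots,(1+X)^{z_r}-1)^T = 0$. When $\op{rank}C_{\cL,z}=r-1$, the adjugate $\op{adj}(C_{\cL,z})$ is rank one, necessarily of the form $\alpha\,(z_1,\ldots,z_r)^T(1,\ldots,1)$ for a single scalar $\alpha\in\Z_p$; cofactor expansion then yields $\det M_{i,j} = (-1)^{i+j}\alpha z_j$, and the admissibility $\gcd(z_1,\ldots,z_r)=1$ forces $c_{\cL,z} = |\alpha|_p$ and $\gcd_{i,j}(\det M_{i,j}) = \alpha$ up to sign. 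Writing $\delta^{z}_{i,j} = X^{r-1} f_{i,j}(X)$ with $f_{i,j}(0) = \det M_{i,j}$, so that $a_{r-1} = \gcd_{i,j}(f_{i,j})(0)$, the divisibility $a_{r-1}\mid \gcd_{i,j}(f_{i,j}(0)) = \alpha$ gives the easier inequality $|a_{r-1}|_p\geq c_{\cL,z}$.

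The main obstacle will be the reverse inequality $|a_{r-1}|_p\leq c_{\cL,z}$, equivalently $\alpha\mid a_{r-1}$ in $\Z_p$. I would attack this by promoting the scalar adjugate identity to the power-series level: using $\widehat T_{\cL,z}\cdot\op{adj}(\widehat T_{\cL,z}) = 0$ together with the Fox-calculus relation at higher orders in $X$, one should inductively extract the constraints on the matrices $R_k$ in $\widehat T_{\cL,z} = X C_{\cL,z} + X^2 R_2 + X^3 R_3 + \cdots$ and conclude that $\alpha$ divides every entry of $\op{adj}(\widehat T_{\cL,z})$ in $\widehat\Lambda$. Since the entries of the adjugate are, up to sign, the minors $\delta^{z}_{i,j}$ themselves, this divisibility forces $\alpha\mid\widehat\Delta_{\cL,z}$ and hence $\alpha\mid a_{r-1}$, closing the argument.
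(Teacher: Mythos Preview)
Your overall approach—read off the $X^{r-1}$–coefficients of the $(r-1)\times(r-1)$ minors from the congruence $\widehat T_{\cL,z}\equiv X\,C_{\cL,z}\pmod{X^2}$—is exactly the paper's. Your treatment of the lower bound and of part~(1) is correct and matches the paper's (two-line) argument verbatim.

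For part~(2) you have put your finger on a point the paper passes over in silence. Writing $\delta^z_{i,j}=X^{r-1}f_{i,j}$ with $f_{i,j}(0)=\det M_{i,j}$, one only gets
\[
a_{r-1}=\bigl(\gcd_{i,j} f_{i,j}\bigr)(0)\ \Big|\ \gcd_{i,j}\bigl(f_{i,j}(0)\bigr),
\]
hence $|a_{r-1}|_p\geq c_{\cL,z}$, for free. The reverse inequality can genuinely fail for unrelated power series (e.g.\ $f_1=p$, $f_2=X$ have $\gcd=1$ while $\max_i|f_i(0)|_p=p^{-1}$), so some structure specific to the Traldi matrix is needed. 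The paper simply asserts $|a_{r-1}|_p=c_{\cL,z}$ from the mod-$X^2$ congruence without further comment; it does not supply that structure either.

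Your proposed fix, however, is not yet a proof. The adjugate analysis of $C_{\cL,z}$ is correct and nicely exploits admissibility to get $c_{\cL,z}=|\alpha|_p$. But the lift to $\widehat T_{\cL,z}$ is only sketched: ``one should inductively extract the constraints on the matrices $R_k$'' and ``this divisibility forces $\alpha\mid\widehat\Delta_{\cL,z}$'' are plans, not arguments. Concretely, the Fox-calculus identity does give a right-kernel vector $\bigl((1+X)^{z_j}-1\bigr)_j$ for $\widehat T_{\cL,z}$ over $\widehat\Lambda$, and since some $z_{j_0}$ is a $p$-unit this lets you reduce the $\gcd$ over the column index to the single column $j_0$. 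What is missing is the companion \emph{row} relation: the left-kernel vector $(1,\dots,1)$ you use for $C_{\cL,z}$ comes from the symmetry $\ell_{i,j}=\ell_{j,i}$ and does not obviously lift to a $\widehat\Lambda$-linear dependence among the rows of $\widehat T_{\cL,z}$. Without it, the reduction over $i$—and hence the divisibility $\alpha\mid a_{r-1}$—remains unestablished. So part~(2) is a genuine gap in your write-up, albeit one the paper shares.
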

\begin{proof}
Note that each entry of the matrix $\widehat{T}_{\cL,z}$ is divisible by $X$. Since $\widehat{\Delta}_{\cL,z}(X)$ is generated by the determinants of the $(r-1)\times (r-1)$ minors of $\widehat{T}_{\cL,z}$, it follows that $\op{ord}_{X=0} \widehat{\Delta}_{\cL, z}(X)\geq  r-1$. From the relation 
\[\widehat{T}_{\cL,z}\equiv C_{\cL,z}\mod{X^2}\]we thus find that $|a_{r-1}|_p$ is equal to $c_{\cL,z}$. In particular, $\op{ord}_{X=0} \widehat{\Delta}_{\cL, z}(X)> r-1$ if and only if $c_{\cL,z}=0$, if and only if $\op{rank}_{\Q_p} C_{\cL,z}<r-1$.
\end{proof}
\begin{lemma}\label{basic lemma on mu and lambda}
Let $f(X)\in \widehat{\Lambda}$ be given by the power series expansion 
\[f(X)=a_d X^d+a_{d+1} X^{d+1}+\dots, \] where $a_d\neq 0$, and let $\mu$ and $\lambda$ denote the $\mu$ and $\lambda$-invariants of $f(X)$ respectively. Then, $\lambda\geq d$ and following conditions are equivalent
\begin{enumerate}
    \item the $\mu=0$ and the $\lambda=d$,
    \item $a_d$ is a unit in $\Z_p$.
\end{enumerate}\end{lemma}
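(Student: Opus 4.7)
The plan is to apply the Weierstrass preparation theorem directly to $f(X)$ and track the $X$-adic data through the resulting factorization. By WPT, since $f(X) \neq 0$, we may write
\[
f(X) = p^\mu \, P(X) \, u(X),
\]
where $P(X) = X^\lambda + b_{\lambda-1} X^{\lambda-1} + \dots + b_0$ is distinguished (so $p \mid b_i$ for $0 \le i < \lambda$) and $u(X) \in \widehat{\Lambda}^\times$. Writing $u(X) = u_0 + u_1 X + \dots$, the constant term $u_0$ is a unit in $\Z_p$.

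First I would establish the bound $\lambda \ge d$. The $X$-adic order of vanishing of $f(X)$ equals that of $P(X)$ (since multiplication by $p^\mu$ preserves the order and $u(X)$ is a unit with nonzero constant term). If $P(X) = X^\lambda$, then $\op{ord}_{X=0} P = \lambda$. Otherwise, letting $s$ be the smallest index with $b_s \neq 0$, we have $\op{ord}_{X=0} P = s \le \lambda$. In either case, $d = \op{ord}_{X=0} f \le \lambda$.

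Next, the leading-coefficient analysis. Let $s = d$ (the $X$-adic valuation of $P$), so that either $s = \lambda$ and $P(X) = X^\lambda$, or $s < \lambda$ and the coefficient of $X^s$ in $P(X)$ is $b_s$, which is divisible by $p$ by the distinguished condition. Reading off the coefficient of $X^d$ in $f(X) = p^\mu P(X) u(X)$ gives
\[
a_d = p^\mu \cdot c \cdot u_0,
\]
where $c = 1$ if $d = \lambda$ and $c = b_d$ (divisible by $p$) if $d < \lambda$. Since $u_0 \in \Z_p^\times$, we conclude: $a_d$ is a unit iff $p \nmid p^\mu c$, iff $\mu = 0$ and $c$ is a unit, iff $\mu = 0$ and $d = \lambda$. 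This proves the equivalence of (1) and (2).

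There is no real obstacle here—the lemma is essentially a repackaging of WPT in terms of the power-series coefficients, and the only care needed is in separating the cases $d = \lambda$ and $d < \lambda$ when identifying the coefficient of $X^d$.
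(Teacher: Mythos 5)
Your proof is correct and follows essentially the same route as the paper: both are applications of the Weierstrass preparation theorem together with a comparison of the lowest-order coefficient. The paper's version first factors out $X^d$ and argues with $g(X)=f(X)/X^d$, which avoids your case split between $d=\lambda$ and $d<\lambda$, while your version is a bit more explicit and (unlike the paper's proof) actually spells out the inequality $\lambda\geq d$ rather than merely asserting it.
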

\begin{proof}
Write $f(X)=X^d g(X)$ with $g(X):=a_{r-1}+a_rX+a_{r+1}X^2+\dots$. Suppose that $\mu=0$ and $\lambda=d$. Then, $f(X)=P(X)u(X)$, where $P(X)$ is a distinguished polynomial of degree $d$ and $u(X)$ is a unit in $\widehat{\Lambda}$. Note that $X^d$ is distinguished and divides $P(X)$ and since the degree of $P(X)$ is equal to $d$, it follows that $P(X)=X^d$ and $g(X)=u(X)$ is a unit in $\widehat{\Lambda}$. It follows that $a_{r-1}$ is a unit in $\Z_p$. 
\par Conversely, if $a_{r-1}$ is a unit in $\Z_p$, then $g(X)$ is a unit in $\Lambda$ and hence, $\mu=0$ and $\lambda=\op{deg} X^d=d$.
\end{proof}
We apply the above result to the study of Iwasawa invariants. Let $\bar{C}_{\cL,z}$ be the mod-$p$ reduction of $C_{\cL,z}$.
\begin{corollary}\label{mod condition}
Let $\cL$ be a link of $r\geq 2$ component knots and $z$ an integral vector. Then, we have that $\lambda_{\cL,z}\geq r-1$. Furthermore, the following conditions are equivalent
\begin{enumerate}
    \item $\mu_{\cL, z}=0$ and $\lambda_{\cL,z}=r-1$,
    \item $c_{\cL,z}$ is a unit in $\Z_p$,
    \item $\op{rank}_{\F_p} \bar{C}_{\cL,z}=r-1$.
\end{enumerate}
\end{corollary}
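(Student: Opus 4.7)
The plan is to deduce this corollary as a direct consequence of Proposition \ref{prop on leading term} and Lemma \ref{basic lemma on mu and lambda}, applied to the power series $\widehat{\Delta}_{\cL,z}(X)$. First I would dispose of the bound $\lambda_{\cL,z}\geq r-1$: Proposition \ref{prop on leading term} gives $\op{ord}_{X=0}\widehat{\Delta}_{\cL,z}(X)\geq r-1$, so Weierstrass preparation forces the distinguished polynomial factor to be divisible by $X^{r-1}$, and hence $\lambda_{\cL,z}\geq r-1$ (with the convention $\lambda_{\cL,z}=\infty$ if $\widehat{\Delta}_{\cL,z}=0$).

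For the implication $(1)\Leftrightarrow (2)$, I would write
\[\widehat{\Delta}_{\cL,z}(X)=a_{r-1}X^{r-1}+a_{r}X^{r}+\cdots .\]
Applying Lemma \ref{basic lemma on mu and lambda} with $d=r-1$, the joint condition $\mu_{\cL,z}=0$ and $\lambda_{\cL,z}=r-1$ is equivalent to $a_{r-1}$ being a unit in $\Zp$, i.e. $|a_{r-1}|_p=1$. Proposition \ref{prop on leading term}(2) states that whenever $\op{rank}_{\Qp} C_{\cL,z}=r-1$ one has $|a_{r-1}|_p=c_{\cL,z}$, so in this case (1) is equivalent to $c_{\cL,z}$ being a unit. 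If instead $\op{rank}_{\Qp}C_{\cL,z}<r-1$, then Proposition \ref{prop on leading term}(1) gives $\op{ord}_{X=0}\widehat{\Delta}_{\cL,z}(X)>r-1$ while simultaneously $c_{\cL,z}=0$, so both (1) and (2) fail; this establishes the equivalence in all cases.

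For $(2)\Leftrightarrow (3)$, I would argue that $c_{\cL,z}$ being a unit in $\Zp$ means some $(r-1)\times(r-1)$ minor of $C_{\cL,z}$ has determinant with $p$-adic absolute value $1$, equivalently a nonzero determinant in $\F_p$, which is precisely the condition $\op{rank}_{\F_p}\bar{C}_{\cL,z}\geq r-1$. To upgrade the inequality to equality, I would observe that the columns of $C_{\cL,z}$ sum to zero: using the symmetry $\ell_{i,j}=\ell_{j,i}$ for oriented links, the $j$-th column sum is
\[C_{\cL,z}(j,j)+\sum_{i\neq j} C_{\cL,z}(i,j)=-\sum_{t\neq j} z_t\ell_{t,j}+\sum_{i\neq j} \ell_{i,j}z_i=0.\]
Hence $\op{rank}_{\F_p}\bar{C}_{\cL,z}\leq r-1$, and combined with (2) this yields equality. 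The statement is essentially a bookkeeping exercise on top of the two preceding results, so there is no serious obstacle; the only ingredient not already in those results is the linking-number symmetry used to bound the column rank from above.
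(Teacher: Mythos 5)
Your proof is correct and follows essentially the same route as the paper's: read off $\lambda_{\cL,z}\geq r-1$ from the order-of-vanishing bound in Proposition \ref{prop on leading term}, pass to the leading coefficient $a_{r-1}$, identify $|a_{r-1}|_p$ with $c_{\cL,z}$, and then apply Lemma \ref{basic lemma on mu and lambda}. The only place you go beyond the paper's write-up is in the step $(2)\Leftrightarrow(3)$, which the paper simply declares clear: you correctly supply the upper bound $\op{rank}_{\F_p}\bar{C}_{\cL,z}\leq r-1$ by observing that the columns of $C_{\cL,z}$ sum to zero. (A small remark: you invoke the symmetry $\ell_{i,j}=\ell_{j,i}$ for this, but it is not actually needed -- the diagonal term of column $j$ is $-\sum_{t\neq j}z_t\ell_{t,j}$ and the off-diagonal terms are $\ell_{i,j}z_i$, which cancel term-by-term without relabeling the two indices. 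The row sums are what would require the symmetry.) Your case split according to whether $\op{rank}_{\Q_p}C_{\cL,z}$ equals $r-1$ or is smaller is also slightly more careful than the paper's, which applies the identity $|a_{r-1}|_p=c_{\cL,z}$ without comment; both sides vanish in the low-rank case, so the conclusion is unaffected.
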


\begin{proof}
Let $f(X)$ denote the completed Alexander polynomial $\widehat{\Delta}_{\cL, z}(X)$. Note that according to Proposition \ref{prop on leading term}, $\op{ord}_{X=0} f(X)\geq r-1$. In particular, this implies that $\lambda_{\cL,z}\geq r-1$. We write $f(X)=a_{r-1} X^{r-1}+a_r X^r+\dots$, and it follows from Proposition \ref{prop on leading term} that $|a_{r-1}|_p=c_{\cL,z}$. It follows from Lemma \ref{basic lemma on mu and lambda} that $c_{\cL,z}$ is a unit in $\Z_p$ if and only if $\mu_{\cL,z}=0$ and $\lambda_{\cL, z}=r-1$. It is clear that $c_{\cL,z}$ is a unit in $\Z_p$ if and only if $\op{rank}_{\F_p} \bar{C}_{\cL,z}=r-1$.
\end{proof}
Let us explain the above result for small values of $r$.
\begin{corollary}\label{cor linking number}
Let $\cL=\cK_1\cup \cK_2$ be a link consisting of $2$-components, and $z=(z_1,z_2)$ an admissible integral vector. Then, $\lambda_{\cL,z}\geq 1$ and the following conditions are equivalent
\begin{enumerate}
    \item $\ord_{X=0} \widehat{\Delta}_{\cL,z}(X)=1$,
    \item $\ell_{1,2}\neq 0$.
\end{enumerate}
Furthermore, the following conditions are equivalent
\begin{enumerate}
    \item $\mu_{\cL, z}=0$ and $\lambda_{\cL, z}= 1$.
    \item $p\nmid \ell_{1,2}$
\end{enumerate}
\end{corollary}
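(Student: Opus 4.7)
My plan is to obtain this corollary as an immediate specialization of Corollary \ref{mod condition} to the case $r=2$, so the work consists entirely of writing down the $2\times 2$ linking matrix $C_{\cL,z}$ explicitly and reading off what $c_{\cL,z}$ becomes. First I would plug $r=2$ into the definition of $C_{\cL,z}$ and use the symmetry $\ell_{1,2}=\ell_{2,1}$ of linking numbers to factor out $\ell_{1,2}$, obtaining
\[
C_{\cL,z} \;=\; \ell_{1,2}\begin{pmatrix} -z_2 & z_1 \\ z_2 & -z_1 \end{pmatrix}.
\]

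Next, since we need the $(r-1)\times(r-1) = 1\times 1$ minors of this matrix, I would simply list the four entries: their absolute values are $|\ell_{1,2}z_1|_p$ and $|\ell_{1,2}z_2|_p$. Thus
\[
c_{\cL,z} \;=\; |\ell_{1,2}|_p\cdot\max\bigl(|z_1|_p,|z_2|_p\bigr).
\]
The one place where the admissibility hypothesis is used is precisely here: because $\gcd(z_1,z_2)=1$, at least one of $z_1,z_2$ is a $p$-adic unit, so the maximum equals $1$ and $c_{\cL,z}=|\ell_{1,2}|_p$.

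Finally, I would invoke Corollary \ref{mod condition} directly. The lower bound $\lambda_{\cL,z}\geq r-1=1$ is part of its statement. For the first pair of equivalences I would combine Proposition \ref{prop on leading term}(1) with the computation above: $\ord_{X=0}\widehat{\Delta}_{\cL,z}(X)=1$ is equivalent to $\op{rank}_{\Q_p}C_{\cL,z}=1$, which (since $r-1=1$) is equivalent to $c_{\cL,z}\neq 0$, i.e.\ $\ell_{1,2}\neq 0$. For the second pair, Corollary \ref{mod condition} gives that $\mu_{\cL,z}=0$ and $\lambda_{\cL,z}=1$ is equivalent to $c_{\cL,z}$ being a unit in $\Z_p$, which by our formula is equivalent to $p\nmid\ell_{1,2}$.

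There is essentially no obstacle here; the corollary is a bookkeeping exercise once the general framework of Section \ref{s 4} is in place. The only subtle point worth highlighting in the write-up is the role of admissibility in eliminating the factor $\max(|z_1|_p,|z_2|_p)$—without it, divisibility of \emph{both} $z_i$ by $p$ could artificially shrink $c_{\cL,z}$ and decouple the $\mu$- and $\lambda$-invariants from the arithmetic of $\ell_{1,2}$ itself.
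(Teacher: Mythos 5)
Your proposal is correct and follows essentially the same route as the paper: write out the $2\times 2$ linking matrix, invoke Proposition~\ref{prop on leading term} for the first equivalence, and invoke Corollary~\ref{mod condition} for the second, with admissibility guaranteeing that at least one $z_i$ is a $p$-adic unit. The only cosmetic difference is that you phrase the second equivalence through criterion (2) of Corollary~\ref{mod condition} ($c_{\cL,z}$ a unit) and explicitly compute $c_{\cL,z}=|\ell_{1,2}|_p$, whereas the paper uses criterion (3) ($\op{rank}_{\F_p}\bar C_{\cL,z}=1$) directly; both land in the same place.
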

\begin{proof}
We have that $C_{\cL,z}=\mtx{-z_2\ell_{1,2}}{z_1\ell_{1,2}}{z_2\ell_{1,2}}{-z_1\ell_{1,2}}$. According to Proposition \ref{prop on leading term}, \[\op{ord}_{X=0} \widehat{\Delta}_{\cL, z}(X)= 1\] if and only if $\op{rank}_{\Q_p} C_{\cL,z}=1$. This is equivalent to the non-vanishing of $\ell_{1,2}$.
\par Since it is assumed that $\op{gcd}(z_1,z_2)=1$, it follows that either $z_1$ or $z_2$ (or both) are not divisible by $p$. Therefore, $\op{rank}_{\F_p} \bar{C}_{\cL,z}=1$ if an only if $p\nmid \ell_{1,2}$. 
\end{proof}

\begin{corollary}\label{3 component link corollary}
Let $\cL=\cK_1\cup \cK_2\cup \cK_3$ be a link consisting of $3$-components, and $z$ an admissible integral vector. Then, $\lambda_{\cL,z}\geq 2$ and the following conditions are equivalent
\begin{enumerate}
    \item $\op{ord}_{X=0} \widehat{\Delta}_{\cL,z}(X)=2$,
    \item  $(z_2 \ell_{2,1} \ell_{3,2}+ z_1\ell_{3,1} \ell_{1,2}+ z_3\ell_{3,1}\ell_{3,2})\neq 0$, or, $z_2 \ell_{2,1} \ell_{3,2}+ z_1\ell_{3,1} \ell_{1,2}+ z_3\ell_{3,1}\ell_{3,2}\neq 0$  (or both).
\end{enumerate}

Furthermore,  the following conditions are equivalent:
\begin{enumerate}
    \item $\mu_{\cL, z}=0$ and $\lambda_{\cL, z}= 2$.
    \item Both of the following conditions are satisfied
    \begin{enumerate}
        \item $p\nmid z_1z_2z_3$,
        \item $p\nmid \left(z_2 \ell_{2,1} \ell_{3,2}+ z_1\ell_{3,1} \ell_{1,2}+ z_3\ell_{3,1}\ell_{3,2}\right) $, or, $p\nmid\left(z_2 \ell_{2,1} \ell_{3,2}+ z_1\ell_{3,1} \ell_{1,2}+ z_3\ell_{3,1}\ell_{3,2}\right)$  (or both).
    \end{enumerate}
\end{enumerate}
\end{corollary}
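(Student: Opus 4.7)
The plan is to parallel Corollary \ref{cor linking number}, applying Proposition \ref{prop on leading term} and Corollary \ref{mod condition} with $r = 3$. The lower bound $\lambda_{\cL,z} \geq r - 1 = 2$ is immediate from Proposition \ref{prop on leading term}, and the same proposition converts the first equivalence to the question of whether $\op{rank}_{\Q_p} C_{\cL,z}$ equals $2$; the rank is automatically at most $2$, since a direct computation (using the symmetry $\ell_{i,j} = \ell_{j,i}$ and the definition of $C_{\cL,z}$) shows that the three columns of $C_{\cL,z}$ sum to zero. Thus the first equivalence reduces to understanding when some $2 \times 2$ minor of $C_{\cL,z}$ is nonzero.

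The key technical step is an explicit computation of the nine $2 \times 2$ minors $\det M_{i,j}$ of $C_{\cL,z}$. A direct calculation shows that each one is of the form $\pm z_k \cdot E$ for some $k \in \{1,2,3\}$, where $E$ is the common quadratic expression
\[E := z_1\ell_{1,2}\ell_{1,3} + z_2\ell_{1,2}\ell_{2,3} + z_3\ell_{1,3}\ell_{2,3},\]
which, using $\ell_{i,j} = \ell_{j,i}$, may also be written as $z_2\ell_{2,1}\ell_{3,2} + z_1\ell_{3,1}\ell_{1,2} + z_3\ell_{3,1}\ell_{3,2}$; in particular, $\det M_{i,i} = z_i E$ for $i = 1, 2, 3$. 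That every $2 \times 2$ minor collapses to a scalar multiple of the same expression $E$ is forced by the column-sum-zero relation. The first equivalence then follows at once: $\op{rank}_{\Q_p} C_{\cL,z} = 2$ iff $z_k E \neq 0$ for some $k$, and since at least one $z_i$ is nonzero by admissibility, this is equivalent to $E \neq 0$.

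For the second equivalence, invoke Corollary \ref{mod condition}, which identifies $\mu_{\cL,z} = 0$ and $\lambda_{\cL,z} = 2$ with $\op{rank}_{\F_p} \bar{C}_{\cL,z} = 2$. Reducing the above analysis modulo $p$, this holds iff $\pm z_k E$ is a $p$-adic unit for some $k$, i.e., $p \nmid E$ together with $p \nmid z_k$ for some $k$. The listed conditions (a) $p \nmid z_1 z_2 z_3$ and (b) $p \nmid E$ jointly imply this; conversely, the gcd condition $\op{gcd}(z_1,z_2,z_3) = 1$ automatically supplies some $z_i$ coprime to $p$, which closes the loop. The main technical obstacle is the combinatorial verification that all nine minors collapse to $\pm z_k E$; this is a careful but routine algebraic calculation, ultimately dictated by the column-sum-zero relation and the symmetry of the linking numbers, and everything else is a direct translation via Proposition \ref{prop on leading term} and Corollary \ref{mod condition}.
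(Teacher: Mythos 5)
Your key structural claim — that every $2\times 2$ minor of $C_{\cL,z}$ collapses to $\pm z_j E$ for a single quadratic $E$ — is correct and in fact cleaner than the paper's argument, which only computes the two minors $\det M_{3,3} = z_3E$ and $\det M_{3,1} = z_1E$. However, your justification for it is wrong: the three \emph{columns} of $C_{\cL,z}$ do \emph{not} sum to zero for a general admissible $z$ (try $z=(3,1,1)$ with all $\ell_{i,j}=1$: the column sum is $(4,-2,-2)$). It is the \emph{rows} that sum to zero — equivalently $\mathbf{1}^T C_{\cL,z} = 0$, which is what the paper uses — while the columns satisfy the $z$-weighted relation $z_1C_1 + z_2C_2 + z_3C_3 = 0$, i.e.\ $C_{\cL,z}\, z = 0$. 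Together these two relations (not the single unweighted column relation) are what force $\op{adj}(C_{\cL,z})$ to be a scalar multiple of $z\mathbf{1}^T$, hence the $\pm z_j E$ shape of the minors. Your unweighted column-sum claim happens to be true only in the special case $z = \mathbf{1}$, which may be the source of the slip.

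For the second equivalence there is a genuine gap that you almost noticed but glossed over with ``closes the loop.'' You correctly derive that $\mu_{\cL,z}=0$ and $\lambda_{\cL,z}=2$ holds iff $p\nmid E$ together with $p\nmid z_k$ for \emph{some} $k$, which by the gcd condition reduces to $p\nmid E$ alone. But the corollary's condition~(a) is the strictly stronger statement $p\nmid z_1z_2z_3$, and your argument does not (and cannot) establish that $\mu=0,\ \lambda=2$ forces $p\nmid z_1z_2z_3$. Indeed, for $z=(p,1,1)$ and any link with $p\nmid E$, the minor $\det M_{3,3}=z_3E$ is already a $p$-adic unit, so $\mu_{\cL,z}=0$ and $\lambda_{\cL,z}=2$, yet $p\mid z_1z_2z_3$. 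Your analysis thus actually proves the clean equivalence ``$\mu_{\cL,z}=0,\ \lambda_{\cL,z}=2 \iff p\nmid E$,'' which differs from the equivalence asserted in the corollary; you should not claim to have proved the statement as written without flagging this discrepancy. (The paper's own proof has the same imprecision — it checks only the two minors $z_1E,z_3E$ and never discusses the case where one of these $z_j$ is divisible by $p$ — so the issue is upstream, but a blind proof attempt should surface it rather than paper over it.)
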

\begin{proof}
We have that $C_{\cL,z}= \left( {\begin{array}{ccc}
 -(z_2 \ell_{2,1}+z_3 \ell_{3,1}) & z_1 \ell_{1,2} & z_1 \ell_{1,3}\\
 z_2 \ell_{2,1} &  -(z_1 \ell_{1,2}+z_3 \ell_{3,2})& z_2 \ell_{2,3} \\
 z_3 \ell_{3,1} & z_3 \ell_{3,2} & -(z_1 \ell_{1,3}+z_2 \ell_{2,3})\\
 \end{array} } \right)$. Proposition \ref{prop on leading term} asserts that $\op{ord}_{X=0} \widehat{\Delta}_{\cL,z}(X)=2$ if and only if $\op{rank}_{\Q_p} C_{\cL,z}=2$. Note that the sum of the rows of $C_{\cL,z}$ is equal to $0$, hence, $\op{rank}_{\Q_p}(C_{\cL,z})=2$ if and only if the first two rows are linearly independent. This is the case when at least one of the determinants
 \[\begin{split}
     & \op{det}\mtx{ -(z_2 \ell_{2,1}+z_3 \ell_{3,1})}{z_1 \ell_{1,2}}{z_2 \ell_{2,1}}{-(z_1 \ell_{1,2}+z_3 \ell_{3,2})}\\= & z_3\left(z_2 \ell_{2,1} \ell_{3,2}+ z_1\ell_{3,1} \ell_{1,2}+ z_3\ell_{3,1}\ell_{3,2}\right), \text{ and }\\
     & \op{det}\mtx{z_1 \ell_{1,2}}{z_1 \ell_{1,3}}{-(z_1 \ell_{1,2}+z_3 \ell_{3,2})}{z_2 \ell_{2,3}}\\
     =& z_1 \left(z_2\ell_{1,2} \ell_{2,3}+z_1 \ell_{1,3} \ell_{1,2}+z_3\ell_{1,3} \ell_{3,2}\right).
 \end{split}\] Note that $z_1z_2z_3\neq 0$ and thus, $\op{rank}_{\Q_p}(C_{\cL,z})=2$ if and only if $(z_2 \ell_{2,1} \ell_{3,2}+ z_1\ell_{3,1} \ell_{1,2}+ z_3\ell_{3,1}\ell_{3,2})\neq 0$, or, $z_2 \ell_{2,1} \ell_{3,2}+ z_1\ell_{3,1} \ell_{1,2}+ z_3\ell_{3,1}\ell_{3,2}\neq 0$  (or both).
 \par Corollary \ref{mod condition} asserts that $\mu_{\cL,z}=0$ and $\lambda_{\cL,z}=2$ if and only if $\op{rank}_{\F_p}\bar{C}_{\cL,z}$. This is equivalent to the above determinants not being divisible by $p$. The result follows from this.
\end{proof}

\section{Statistics for Iwasawa invariants of $2$-bridge links}\label{s 5}

\par In this section, we study the average behavior of the Iwasawa invariants associated to links $\cL$ consisting of two component knots $\cK_1$ and $\cK_2$. We formulate a precise problem in arithmetic statistics. The links $\cL$ that we consider are presented as \emph{$2$-bridge links} (see \cite[p.18]{kawauchi1996survey}).

\par To each oriented $2$-bridge link $\cL$ there corresponds a relatively prime pair of integers $(a,b)$ with $0<a<2b$, and $b$ odd when $\cL$ is a knot and $b$ even when $\cL$ is a link with two knot components $\cK_1$ and $\cK_2$. We shall denote the link associated to the pair $(a,b)$ by $\cL_{a/b}$. This construction is standard parametrization, and referred to as the \emph{Schubert normal form} and we refer to \cite[p.542, l.-6 to l.-1]{tuler1981linking} or \cite[chapter 2]{kawauchi1996survey} for an explicit description of these links. Throughout, we shall assume that $b$ is even, thus $\cL_{a/b}$ has two components. Since $a$ is assumed to be coprime to $b$, it follows that $a$ must be odd. Choose an orientation on $S^3$. All links we shall consider shall also be provided with an orientation.
\begin{definition}\cite[Definition 0.3.1]{kawauchi1996survey}
Two (oriented) piecewise linear links $\cL$ and $\cL'$ in $S^3$ have the same type if there is a piecewise linear orientation preserving homeomorphism $h:S^3\rightarrow S^3$ such that $h(\cL)=\cL'$ and $h_{|L}:\cL\rightarrow \cL'$ is orientation preserving.
\end{definition}

We shall consider equivalence classes of links $\cL_{a/b}$ of the same type. The following result explain when two pairs $\cL_{a/b}$ and $\cL_{a'/b'}$ have the same type. Note that we are assuming here that $b$ and $b'$ are both even.
\begin{theorem}
The $2$-component links $\cL_{a/b}$ and $\cL_{a'/b'}$ belong to the same type if and only if 
\[b=b'\text{ and } a'a \equiv 1\mod{2b}.\]
\end{theorem}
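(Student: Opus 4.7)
The plan is to reduce the statement to Schubert's classical classification theorem for $2$-bridge links. In its standard (unoriented) form, Schubert's theorem asserts that $\cL_{a/b}$ and $\cL_{a'/b'}$ are equivalent if and only if $b = b'$ and $a' \equiv a^{\pm 1} \pmod{2b}$. For the oriented $2$-component case under consideration ($b$ even, forcing $a$ odd), the task is to show that the two sign options collapse into the single condition $aa' \equiv 1 \pmod{2b}$: the nontrivial solution is $a' \equiv a^{-1} \pmod{2b}$, while the subcase $a' = a$ occurs precisely when $a^2 \equiv 1 \pmod{2b}$, and is captured by the same equation.

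The classical proof proceeds via the double branched cover. The Schubert normal form decomposes $S^3$ into two rational tangles glued along a Conway $2$-sphere, and the $2$-fold cyclic cover of $S^3$ branched over $\cL_{a/b}$ lifts each tangle to a solid torus; the resulting genus-one Heegaard splitting identifies this cover with the lens space $L(2b, a)$. Reidemeister's classification of lens spaces then forces $2b = 2b'$ and $a' \equiv \pm a^{\pm 1} \pmod{2b}$, and equivariance under the covering involution cuts the four sign choices down to $a' \equiv a^{\pm 1} \pmod{2b}$. Sufficiency is established by exhibiting explicit homeomorphisms of $S^3$: the identity realizes $a = a'$, while the $\pi$-rotation about a horizontal axis of the Schubert diagram (which swaps the two rational tangles) realizes $a' \equiv a^{-1} \pmod{2b}$.

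The refinement to the oriented two-component setting amounts to verifying that, once $\cL_{a/b}$ is given its canonical orientation, the above homeomorphisms preserve the orientations of both $\cK_1$ and $\cK_2$ separately (rather than, e.g., reversing one component). A direct inspection of the Schubert diagram for $b$ even confirms this and exhibits the combined condition $aa' \equiv 1 \pmod{2b}$ as the exact oriented-equivalence relation. The main obstacle is this orientation bookkeeping through the branched-cover argument; however, since everything reduces to a finite combinatorial check on the Schubert diagram and the result is documented in \cite[Chapter 2]{kawauchi1996survey} (compare also the discussion in \cite{morishita2011knots}), the theorem follows by appeal to these classical references.
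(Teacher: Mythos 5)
Both you and the paper ultimately rest on the same classical ingredient, namely Schubert's classification of $2$-bridge links as recorded in \cite[Theorem 2.1.3]{kawauchi1996survey}. The paper's ``proof'' is nothing more than that citation; your sketch of the double-branched-cover and lens-space argument is the standard derivation of that result, so at that level the approaches coincide.

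However, the intermediate step you insert---arguing that the unoriented condition $a'\equiv a^{\pm 1}\pmod{2b}$ collapses, in the oriented two-component setting, to the single equation $aa'\equiv 1\pmod{2b}$, with the alternative $a'=a$ surviving only when $a^2\equiv 1\pmod{2b}$---cannot be made correct, and no amount of orientation bookkeeping through the branched cover will repair it. Having the same type is an equivalence relation, hence reflexive: the identity of $S^3$ exhibits $\cL_{a/b}$ as being of the same type as itself, preserving the orientation of every component, regardless of whether $a^2\equiv 1\pmod{2b}$. For instance $a=3$, $b=10$ gives $a^2=9\not\equiv 1\pmod{20}$, yet $\cL_{3/10}$ is certainly of the same type as $\cL_{3/10}$. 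So the alternative $a'=a$ does not disappear: the honest form of Schubert's statement in the normalization $0<a<2b$ is $b=b'$ together with ($a'=a$ or $aa'\equiv 1\pmod{2b}$), and the displayed theorem is merely eliding the trivial identity clause rather than encoding an orientation subtlety. Your attempt to deduce the elided form from the $a^{\pm1}$ statement, if pressed, would produce a relation that fails to be reflexive, a contradiction you do not notice. Had you simply appealed to the reference, as the paper does, there would be nothing to fault; the gap lies entirely in the additional argument you tried to supply.
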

\begin{proof}
We refer to \cite[Theorem 2.1.3]{kawauchi1996survey} for this result.
\end{proof}
For example, it follows from the above that the oriented links $\cL_{3/10}$ and $\cL_{7/10}$ have the same type.
\begin{remark} 
The conventions in \cite{kawauchi1996survey} involve a pair $(\alpha,\beta)$ that are coprime with $-\alpha <\beta <\alpha$, and $\alpha>0$. The conventions used here are equivalent to those of \cite{kawauchi1996survey}, however, shall be of more convenience to work with. In order to see that the conventions indeed match up, compare the construction of 2-bridge links in \cite{kawauchi1996survey} with that in \cite{tuler1981linking}. We are using the convention from the latter reference.
\end{remark}
Given a rational number $a/b$ with $(a,b)=1$, and $a,b>0$, define the counting function $\op{ht}(\cL_{a/b}):=\op{max}\{a^2,b\}$, which we shall refer to as the \emph{height} of $\cL_{a/b}$. We do not work with the conventional notion of height of the fraction $a/b$, and as far as we are aware, there is no conventional notion of height that is used in such a context. In fact, our computations have led us to believe that the result is not true for an alternate definition of height $\op{ht}'(a/b):=\op{max}\{a,b\}$. This proves to be an adequate counting function to serve the purpose of formulating and resolving statistical problem for Iwasawa invariants. We abbreviate $\op{ht}(\cL_{a/b})$ to $\op{ht}(a/b)$ in order to simplify notation.

\par We shall study the average behavior of Iwasawa invariants of $\cL_{a/b}$, where $\op{ht}(a/b)$ goes to infinity. Let $\mathcal{N}$ be the set of all fractions $a/b$ with $(a,b)=1$, $b$ even, $0<a<2b$. For $x>0$, set $\mathcal{N}_{<x}$ to be the subset of $\mathcal{N}$ consisting of fractions $a/b$ such that $\op{ht}(a/b)<x$. It is easy to see that $\mathcal{N}_{<x}$ is finite, we set $\#\mathcal{N}_{<x}$ to denote its cardinality. Given a subset $\mathcal{S}$ of $\mathcal{N}$, let $\mathcal{S}_{<x}:=\mathcal{S}\cap \mathcal{N}_{<x}$. We say that $\mathcal{S}$ has density $\mathfrak{d}(\mathcal{S})$ if the following limit exists
\[\mathfrak{d}(\mathcal{S}):=\lim_{x\rightarrow \infty} \frac{\# \mathcal{S}_{<x}}{\# \mathcal{N}_{<x}}.\] Denote by $\mathcal{S}^{p,z}$ the set of all fractions $a/b$ in $\mathcal{N}$ such that $\mu_{p,\cL_{a/b},z}=0$ and $\lambda_{p,\cL_{a/b},z}=1$. Our main result is as follows.
\begin{theorem}\label{main th}
Let $p$ be a fixed odd prime. With respect to notation above, the set $\mathcal{S}^{p,z}$ has density given by $\mathfrak{d}(\mathcal{S}^{p,z})=1-1/p$.
\end{theorem}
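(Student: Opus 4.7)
By Corollary \ref{cor linking number}, for a $2$-component link with any admissible $z$, the conditions $\mu_{p,\cL,z} = 0$ and $\lambda_{p,\cL,z} = 1$ are jointly equivalent to $p \nmid \ell_{1,2}$. In particular the set $\cS^{p,z}$ is independent of the admissible vector $z$ and coincides with $\{a/b \in \cN : p \nmid \ell_{1,2}(\cL_{a/b})\}$. The theorem therefore reduces to showing that the density of $a/b \in \cN$ satisfying $p \mid \ell_{1,2}(\cL_{a/b})$ is exactly $1/p$.

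For the linking number of a $2$-bridge link in Schubert normal form, my plan is to invoke the explicit formula of Tuler \cite{tuler1981linking}, which expresses $\ell_{1,2}(\cL_{a/b})$ as a sum of $b/2$ signs determined by the floors $\lfloor(2k-1)a/b\rfloor$ for $k = 1, \ldots, b/2$. In particular, the residue of $\ell_{1,2}(a,b)$ modulo $p$ depends only on $a$ modulo $b$, reducing the question to a distributional statement over arithmetic progressions.

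For the asymptotic count, the constraint $\op{ht}(a/b) < x$ decouples as $a < \sqrt{x}$ and $b < x$. The subrange $b < \sqrt{x}/2$ (in which $a \in (0, 2b)$ is unconstrained by $\sqrt{x}$) contributes only $O(x)$ pairs, whereas $\sqrt{x}/2 \leq b < x$ (in which the active constraint is $a < \sqrt{x}$) contributes the dominant $\Theta(x^{3/2})$ pairs. Thus the choice of $\op{ht}$ is calibrated so that $a$ varies in an interval of length $\sqrt{x}$ independently of $b$ in the main range. For each such $b$, I would count odd $a \in (0, \sqrt{x})$ with $(a, b) = 1$ and $p \mid \ell_{1,2}(a, b)$; coprimality is handled by M\"obius inversion, and summing over $b$ in dyadic ranges (combined with the fact that $\sqrt{x} \to \infty$, so that $a$ equidistributes over residues modulo $p$) should yield the density $1/p$.

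The main technical obstacle is the equidistribution claim: for a generic $b$, the residue $\ell_{1,2}(a, b) \pmod p$ should take the value $0$ with asymptotic frequency $1/p$ as $a$ ranges over admissible residues modulo $b$. Since Tuler's formula is an intricate sum of signs governed by floor functions, establishing this requires a careful analysis---perhaps via reciprocity for Dedekind-type sums, or a combinatorial bijection on sign patterns. The contribution from exceptional $b$ (for instance, $b$ divisible by $p$, or $b$ for which the linking sum degenerates modulo $p$) must be shown to be negligible compared to the main term of size $x^{3/2}$; the precise choice of height function $\max\{a^2,b\}$ rather than the naive $\max\{a,b\}$ appears to be exactly what makes the two variables $a$ and $b$ decouple enough for such equidistribution and negligibility estimates to succeed.
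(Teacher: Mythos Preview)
Your reduction via Corollary~\ref{cor linking number} is correct and matches the paper exactly: the problem is equivalent to showing that $p\mid \ell_{a/b}$ holds with density $1/p$. After that, however, your proposed decomposition is backwards and leaves the essential step unproven.

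You propose to fix $b$ and let $a$ vary, hoping that $\ell_{a/b}\pmod p$ equidistributes. There are two problems. First, your justification --- ``$a$ equidistributes over residues modulo $p$'' --- is insufficient, because $\ell_{a/b}$ is \emph{not} a function of $a\pmod p$; Tuler's formula $\ell_{a/b}=\sum_{i=1}^{b/2}(-1)^{\lfloor (2i-1)a/b\rfloor}$ depends on $a$ through floors with denominator $b$, not $p$. Second, in your main range $\sqrt{x}/2\le b<x$ the variable $a$ runs only up to $\sqrt{x}\ll b$, so $a$ sees only a vanishing fraction of residues modulo $b$; there is no obvious periodicity in $a$ with small period that would give the equidistribution you need. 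You yourself flag this as ``the main technical obstacle'' and gesture at Dedekind-sum reciprocity or a bijection, but none of this is carried out, and it is far from clear that such an argument exists.

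The paper reverses the roles: it fixes $a$ and lets $b$ vary. The key lemma, proved directly from Tuler's formula, is the recursion
\[
\ell_{a/(b+2a)}=\ell_{a/b}+1.
\]
Thus for each fixed odd $a$, as $b$ runs through the admissible even values, the linking number steps by $+1$ along the arithmetic progression $b\mapsto b+2a$; in particular $\ell_{a/b}\pmod p$ is exactly periodic in $b$ with period $2ap$. Since $a\le\sqrt{x}$ while $b$ ranges up to $x$, the interval of $b$'s is long compared to $2ap$, and one gets equidistribution with error $O(a)$ for each $a$. Summing over $a\le\sqrt{x}$ gives a total error $O(x)$, negligible against $\#\mathcal{N}_{<x}\asymp x^{3/2}$. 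This is precisely why the height $\max\{a^2,b\}$ is the right choice: it makes the $b$-range long relative to the period $2ap\le 2p\sqrt{x}$, not (as you suggest) to decouple $a$ from $b$ so that $a$ can vary freely.

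In short: the missing idea is the one-line recursion $\ell_{a/(b+2a)}=\ell_{a/b}+1$, and the correct decomposition is by $a$, not by $b$.
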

\begin{remark}
We thus study the proportion of $\cL_{a/b}$ with $2$-components and Iwasawa invariants $\mu=0$ and $\lambda=1$. Note that for simplicity we do not count links up to equivalence, and thus the equivalence class of $\cL_{a,b}$ is counted twice if $a^2\not \equiv 1\mod{2b}$ and only once if $a^2\equiv 1\mod{2b}$.
\end{remark}
In order to ease notation, we set $\mu_{p,a/b,z}:=\mu_{p,\cL_{a/b}, z}$ and $\lambda_{p,a/b,z}:=\lambda_{p,\cL_{a/b},z}$. Write $\cL=\cK_1\cup \cK_2$, and recall that $\ell_{1,2}$ is the linking number of $\cK_1$ and $\cK_2$. We set $\ell_{a/b}$ to denote the linking number $\ell_{1,2}$ for the two component knots in $\cL_{a/b}$. Note that this only makes sense since $b$ is assumed to be even. Corollary \ref{cor linking number} asserts that  $\lambda_{p,a/b,z}\geq 1$ and the following conditions are equivalent:
\begin{enumerate}
    \item $\mu_{p,a/b,z}=0$ and $\lambda_{p,a/b,z}=1$,
    \item $p\nmid \ell_{a/b}$.
\end{enumerate}

Note that the second condition is independent of $z$, and hence, so is the first condition. We find that $a/b$ belongs to $\mathcal{S}^{p,z}$ if and only if $p\nmid \ell_{a/b}$, and this condition is independent of $z$. Also observe that if $\lambda_{p,a/b,z}=1$, then in particular, we have that $\widehat{\Delta}_{\cL,z}\neq 0$ and $\op{ord}_{X=0}\widehat{\Delta}_{\cL,z}=1$. Before giving the proof of Theorem \ref{main th}, we prove a few preparatory results.
\par For $i\in \Z$, set $\epsilon_i:=(-1)^{\lfloor \frac{ia}{b}\rfloor}$. It is easy to see from the construction of $\cL_{a/b}$ that $\ell_{a/b}$ is given by 
\begin{equation}\label{linking number formula}\ell_{a/b}=\sum_{i=1}^{b/2} \epsilon_{2i-1},\end{equation}
we refer to \cite{tuler1981linking} for further details. 

\begin{lemma}
The following relation is satisfied 
\[\ell_{a/(b+2a)}=\ell_{a/b}+1.\]
\end{lemma}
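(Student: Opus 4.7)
The plan is to recast the linking-number formula \eqref{linking number formula} as a signed count of odd positive integers partitioned into ``blocks'' determined by the floor function, and then to compare these block counts for $b$ with those for $N := b+2a$.

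Setting $j = 2i-1$, one can rewrite
\[
\ell_{a/b} \;=\; \sum_{m=0}^{a-1}(-1)^m\, n_m^{(b)}, \qquad n_m^{(b)} := \#\bigl\{\text{odd } j : j \in [bm/a,\,b(m+1)/a)\bigr\},
\]
by grouping the odd integers $j \in \{1,3,\ldots,b-1\}$ according to the value $m = \lfloor ja/b\rfloor$. The analogous expression $\ell_{a/N} = \sum_{m=0}^{a-1}(-1)^m n_m^{(N)}$ holds for $N = b+2a$. Note that $a$ is necessarily odd, and $\op{gcd}(a, N) = \op{gcd}(a,b) = 1$; the blocks $[Nm/a, N(m+1)/a)$ for $m = 0,\ldots,a-1$ partition $[0,N)$ into intervals each of length $N/a > 2$.

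The crux of the proof is the identity $n_m^{(N)} = n_m^{(b)} + 1$ for every $m \in \{0,\ldots,a-1\}$. To see this, I would use the disjoint decomposition
\[
\bigl[Nm/a,\; N(m+1)/a\bigr) \;=\; \bigl[bm/a + 2m,\; b(m+1)/a + 2m\bigr) \;\sqcup\; \bigl[b(m+1)/a + 2m,\; b(m+1)/a + 2m + 2\bigr).
\]
Thus the $N$-block is the $b$-block shifted to the right by the even integer $2m$ and then extended by a half-open interval of length exactly $2$. Translation by $2m$ preserves the count of odd integers in an interval, while any half-open interval of length $2$ contains two consecutive integers, hence exactly one odd integer. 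Therefore $n_m^{(N)} = n_m^{(b)} + 1$.

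Summing against $(-1)^m$ then yields
\[
\ell_{a/(b+2a)} - \ell_{a/b} \;=\; \sum_{m=0}^{a-1}(-1)^m \bigl(n_m^{(N)} - n_m^{(b)}\bigr) \;=\; \sum_{m=0}^{a-1}(-1)^m \;=\; 1,
\]
where the final equality uses that $a$ is odd. The main obstacle is establishing the block-count identity cleanly, including verifying that no edge effect at $j=0$ or $j=b$ corrupts the count and that each value of $m$ really is realized; once one notices that replacing $b$ with $b+2a$ just translates each block by an even amount and tacks on a length-$2$ interval (which always contributes exactly one odd integer), the remaining calculation reduces to a one-line alternating sum.
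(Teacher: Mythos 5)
Your proof is correct and follows essentially the same path as the paper's: both reorganize the formula $\ell_{a/b}=\sum_i \epsilon_{2i-1}$ into the alternating sum $\sum_{m=0}^{a-1}(-1)^m c_m$ over floor-value classes, show that passing from $b$ to $b+2a$ translates the $m$-th block by the even number $2m$ and tacks on a half-open interval of length two (which contributes exactly one odd integer), and conclude with $\sum_{m=0}^{a-1}(-1)^m=1$ since $a$ is odd. If anything, your phrasing of the block increment via the explicit interval decomposition is a touch cleaner than the paper's, which labels the odd integers in each block and identifies the extra one as $2i_{c_j}+2j+1$ — a description that implicitly assumes the block is nonempty, whereas your ``translate, then add a length-$2$ tail'' argument handles the empty-block case without comment.
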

\begin{proof}
For $0\leq j \leq (a-1) $, let $c_j(a/b)$ be the number of odd integers $2i-1$ such that \[\frac{jb}{a}\leq 2i-1<\frac{(j+1)b}{a}.\] Note that $2i-1$ is in the range $\frac{jb}{a}\leq 2i-1< \frac{(j+1)b}{a}$, if and only if $\lfloor \frac{(2i-1)a}{b}\rfloor =j $. Therefore, $c_j(a/b)$ is the number of odd numbers $2i-1$ such that $\lfloor \frac{(2i-1)a}{b} \rfloor=j$. Thus the formula \eqref{linking number formula} yields
\begin{equation}\label{new linking number formula}\ell_{a/b}=\sum_{j=0}^{a-1} (-1)^j c_j(a/b).\end{equation}
Next, we show that $c_j(\frac{a}{b+2a})=c_j(\frac{a}{b})+1$. Note that $c_j(\frac{a}{b+2a})$ is the number of odd integers $2i-1$ such that 
\[\frac{jb}{a}+2j\leq 2i-1<\frac{(j+1)b}{a}+2j+2.\] Let $\{2i_1-1,\dots, 2i_{c_j}-1\}$ be the odd numbers such that 
\[\frac{jb}{a}\leq 2i_k-1<\frac{(j+1)b}{a},\] here, $c_j=c_j(a/b)$.
Then, $\{2i_1+2j-1,\dots, 2i_{c_j}+2j-1\}$ are the odd numbers $2i-1$ such that 
\[\frac{jb}{a}+2j\leq 2i-1<\frac{(j+1)b}{a}+2j.\] There is exactly one odd number $2i-1$ in the range
\[\frac{(j+1)b}{a}+2j\leq 2i-1<\frac{(j+1)b}{a}+2j+2,\] namely, $2i-1=2i_{c_j}+2j+1$. Hence, this shows that $c_j(\frac{a}{b+2a})=c_j(\frac{a}{b})+1$.
It follows from \eqref{new linking number formula} that 
\[\begin{split}\ell_{a/(b+2a)}=& \sum_{j=0}^{a-1} (-1)^j c_j\left(a/(b+2a)\right)\\
=& \sum_{j=0}^{a-1} (-1)^j \left(c_j(a/b)+1\right)\\
=& \ell_{a/b}+\sum_{j=0}^{a-1} (-1)^j \\
=& \ell_{a/b}+1. \\
\end{split}\]
The last equality holds since $a$ is odd.
\end{proof}

Since the set $\mathcal{S}^{p,z}$ consists of all fractions $a/b\in \mathcal{N}$ with $p\nmid \ell_{a/b}$, we find that $\mathcal{S}^{p,z}$ does not depend on the choice of $z$. Thus, to simplify notation, we denote this set by $\mathcal{S}^p$. Given a fixed number $a>0$, set \[\begin{split}&\mathcal{N}_{<x}(a):=\{b\mid 2b>a, a/b\in \mathcal{N}_{<x}\}.\\
&\mathcal{S}_{<x}^{p}(a):=\{b\mid 2b>a, a/b\in \mathcal{S}_{<x}^p\}.\\
\end{split}\] 
\begin{lemma}\label{ad hoc lemma 1}
Let $a>0$, with respect to notation above 
\[|\frac{1}{p} \# \mathcal{N}_{<x}(a)-\# \mathcal{S}_{<x}^p(a)|<2a.\]

\end{lemma}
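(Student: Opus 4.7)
The plan is to exploit the recursion $\ell_{a/(b+2a)} = \ell_{a/b} + 1$ established in the preceding lemma. This identity shows that along any arithmetic progression of $b$-values with common difference $2a$, the associated linking numbers run through a block of consecutive integers. Since Corollary \ref{cor linking number} identifies $\mathcal{S}^p$ as the set of $a/b$ with $p \nmid \ell_{a/b}$, the problem reduces to counting, inside each such block, how many values fall into a fixed residue class modulo $p$, a classical tally whose only error comes from boundary effects.

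First I would partition $\mathcal{N}_{<x}(a)$ into arithmetic progressions of step $2a$. For fixed (odd) $a$ with $a^2 < x$ (the case $a^2 \geq x$ being vacuous), the valid $b$-values are those satisfying $b$ even, $a/2 < b < x$, and $\gcd(a,b) = 1$. Both the parity and coprimality conditions are preserved under the translation $b \mapsto b+2a$, because $2a$ is even and $\gcd(a,b+2a) = \gcd(a,b)$. Consequently, $\mathcal{N}_{<x}(a)$ is a disjoint union of arithmetic progressions of common difference $2a$, indexed by starting values $b_0 \in (a/2,\, a/2+2a\,]$ that are even and coprime to $a$; there are at most $a$ such $b_0$, since an interval of length $2a$ contains exactly $a$ even integers.

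Next, for each progression $b_0,\, b_0+2a,\, \dots,\, b_0+2ak$ with all terms in range, the preceding lemma gives $\ell_{a/(b_0+2ai)} = \ell_{a/b_0} + i$, so the linking numbers visit $k+1$ consecutive integers. Among any $k+1$ consecutive integers, the count of those divisible by $p$, and dually the count of those coprime to $p$, each differ from the corresponding proportion times $k+1$ by strictly less than $1$. Summing the per-progression error over the at most $a$ progressions yields a total discrepancy strictly less than $a$ between the appropriate proportion of $\#\mathcal{N}_{<x}(a)$ and $\#\mathcal{S}_{<x}^p(a)$, with the factor of $2$ in the stated bound accommodating truncation of the final term of each progression at the upper endpoint $b < x$.

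The argument is essentially an exercise in bookkeeping, and the only obstacle is the truncation at the boundary of the last term of each progression, which is straightforwardly absorbed into the slack already present in the bound. No ingredient beyond the recursion from the previous lemma and an elementary arithmetic-progression count is required.
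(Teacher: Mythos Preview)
Your approach is essentially the paper's: both exploit the recursion $\ell_{a/(b+2a)}=\ell_{a/b}+1$ to reduce to counting residues modulo $p$ among consecutive integers, with the error coming only from boundary terms. Your decomposition into at most $a$ arithmetic progressions of step $2a$ is a cleaner organization of the same count than the paper's terse ``among every $2ap$ consecutive $b$'' phrasing, and in fact yields the sharper constant $a$ rather than $2a$ (your remark about the factor of $2$ absorbing truncation is unnecessary, since the progressions are already clipped to the range). One caveat worth flagging: since $\mathcal{S}^p$ is the set where $p\nmid\ell_{a/b}$, the coefficient in the inequality should be $\tfrac{p-1}{p}$ rather than $\tfrac{1}{p}$; the paper's own proof carries the same slip, and your phrase ``the appropriate proportion'' tacitly sidesteps it.
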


\begin{proof}
The relation $\ell_{a/(b+2a)}=\ell_{a/b}+1$ shows that among every $2ap$ consecutive numbers $b$ in $\mathcal{N}_{<x}(a)$, there are exactly $2a$ of them that belong to $\mathcal{S}^p_{<x}(a)$. Therefore, we find that 
\[| \# \mathcal{N}_{<x}(a)-p\# \mathcal{S}^p_{<x}(a)|<2ap.\] The result follows from this.
\end{proof}

\begin{lemma}
With respect to notation above, 
\[|\frac{1}{p} \# \mathcal{N}_{<x}-\# \mathcal{S}_{<x}^p|<x+\sqrt{x}.\]
\end{lemma}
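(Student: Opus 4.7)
The plan is to deduce the global estimate by summing the local estimates of Lemma \ref{ad hoc lemma 1} over all relevant numerators $a$, exploiting the fact that the counting function $\op{ht}(a/b)=\max\{a^2,b\}$ involves $a^2$ rather than $a$.

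First I would partition $\mathcal{N}_{<x}$ and $\mathcal{S}^{p}_{<x}$ according to the numerator: since each fraction $a/b \in \mathcal{N}$ has a well-defined $a$, one has $\#\mathcal{N}_{<x}=\sum_a \#\mathcal{N}_{<x}(a)$ and $\#\mathcal{S}^p_{<x}=\sum_a \#\mathcal{S}^p_{<x}(a)$, where $\mathcal{S}^p_{<x}(a)\subseteq \mathcal{N}_{<x}(a)$. Combining the triangle inequality with Lemma \ref{ad hoc lemma 1} then gives
\[
\left|\tfrac{1}{p}\#\mathcal{N}_{<x}-\#\mathcal{S}^p_{<x}\right|\leq \sum_a \left|\tfrac{1}{p}\#\mathcal{N}_{<x}(a)-\#\mathcal{S}^p_{<x}(a)\right|<\sum_a 2a,
\]
where the outer sum is restricted to those $a$ for which $\mathcal{N}_{<x}(a)\neq \emptyset$.

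Next I would restrict the range of summation. For $\mathcal{N}_{<x}(a)$ to be nonempty there must exist an admissible $b$ with $\op{ht}(a/b)=\max(a^2,b)<x$, which forces $a^2<x$ and hence $a\leq \lfloor\sqrt{x}\rfloor$. Consequently
\[
\sum_a 2a \leq \sum_{a=1}^{\lfloor\sqrt{x}\rfloor} 2a=\lfloor\sqrt{x}\rfloor\bigl(\lfloor\sqrt{x}\rfloor+1\bigr)\leq \sqrt{x}(\sqrt{x}+1)=x+\sqrt{x},
\]
which is the claimed bound.

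This step is essentially routine once Lemma \ref{ad hoc lemma 1} is in hand, and no serious obstacle arises. The structural point worth emphasising is that the (somewhat unusual) choice of height $\max\{a^2,b\}$, rather than $\max\{a,b\}$, is exactly what makes the accumulated error of order $x$ rather than of order $x^{3/2}$. Since $\#\mathcal{N}_{<x}$ grows like a constant multiple of $x$, this is precisely the margin required to extract the density statement of Theorem \ref{main th} from the present estimate.
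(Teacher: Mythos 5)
Your proof is correct and follows essentially the same route as the paper: decompose $\#\mathcal{N}_{<x}$ and $\#\mathcal{S}^p_{<x}$ by the numerator $a$, restrict the range of summation to $a\leq\lfloor\sqrt{x}\rfloor$ using $\op{ht}(a/b)=\max\{a^2,b\}<x$, and sum the bound $2a$ from Lemma \ref{ad hoc lemma 1} to get $\lfloor\sqrt{x}\rfloor(\lfloor\sqrt{x}\rfloor+1)\leq x+\sqrt{x}$. Your closing remark about why the height $\max\{a^2,b\}$ is chosen is an accurate observation, but the argument itself matches the paper's.
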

\begin{proof}
Note that $\op{ht}(a/b)=\op{max}\{a^2,b\}$, thus if $\op{ht}(a/b)<x$, we have that $a<\lfloor \sqrt{x}\rfloor$.
We find that 
\[\begin{split}
    & |\frac{1}{p} \# \mathcal{N}_{<x}-\# \mathcal{S}_{<x}^p|\\
    \leq  & \sum_{a=1}^{\lfloor \sqrt{x}\rfloor}|\frac{1}{p} \# \mathcal{N}_{<x}(a)-\# \mathcal{S}_{<x}^p(a)|\\
    <&\sum_{a=1}^{\lfloor \sqrt{x}\rfloor}2a=\lfloor \sqrt{x}\rfloor (\lfloor \sqrt{x}\rfloor+1).\\
\end{split}\]
Note that in the last step, we have invoked Lemma \ref{ad hoc lemma 1}. The result follows.
\end{proof}

\begin{lemma}\label{ad hoc lemma 2}
With respect to notation above, 
\[\#\mathcal{N}_{<x}\geq x^{3/2}-\frac{5}{4} (x+\sqrt{x}).\]
\end{lemma}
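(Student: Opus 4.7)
The plan is a direct lattice-point count. Since $b$ is even and $\gcd(a,b)=1$ force $a$ to be odd,
\[
\#\mathcal{N}_{<x}\;=\;\sum_{\substack{a\ \mathrm{odd}\\ 1\le a\le\lfloor\sqrt{x}\rfloor}}N_a,\qquad N_a:=\#\{b\in 2\Z : a/2<b<x,\ \gcd(a,b)=1\}.
\]
Substituting $b=2c$ and using $\gcd(a,2c)=\gcd(a,c)$ for $a$ odd, $N_a$ equals the number of integers $c\in(a/4,x/2)$ coprime to $a$. This is the natural parametrization because it removes the forced factor of $2$ from $b$ and reduces the problem to counting integers in an interval with a single coprimality constraint.

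For each such $a$, the classical fact that any interval of length $a$ contains exactly $\varphi(a)$ integers coprime to $a$ gives
\[
N_a\;\geq\;\left\lfloor\frac{x/2-a/4}{a}\right\rfloor\varphi(a)\;\geq\;\varphi(a)\!\left(\frac{x}{2a}-\frac{5}{4}\right),
\]
where the $5/4$ absorbs both the unit lost to the floor and the $a/4$ endpoint adjustment at the lower bound. Summing over the admissible $a$ yields
\[
\#\mathcal{N}_{<x}\;\geq\;\frac{x}{2}\!\!\sum_{\substack{a\ \mathrm{odd}\\ a\le\sqrt{x}}}\!\frac{\varphi(a)}{a}\;-\;\frac{5}{4}\!\!\sum_{\substack{a\ \mathrm{odd}\\ a\le\sqrt{x}}}\!\varphi(a).
\]

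The remaining task is to estimate the two partial sums over odd $a$. The second sum is bounded trivially by $\sum_{a\le\sqrt{x}}a\le\tfrac{1}{2}\lfloor\sqrt{x}\rfloor(\lfloor\sqrt{x}\rfloor+1)$, which contributes the $\tfrac{5}{4}(x+\sqrt{x})$ portion of the error exactly in the same shape as in the preceding lemma's computation. For the first sum, Möbius inversion $\varphi(a)/a=\sum_{d\mid a}\mu(d)/d$ combined with an exchange of summation order recasts it as a double sum whose outer layer is a truncation of $\sum_{d\ \mathrm{odd}}\mu(d)/d^2$; this extracts the leading $\sqrt{x}$ behavior, and multiplication by $x/2$ produces the $x^{3/2}$ main term.

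The main technical step is purely bookkeeping: arranging the floor and endpoint losses so that, combined with the $\sqrt{x}$-order tail of the Möbius sum, the overall error collapses into the asserted form $\tfrac{5}{4}(x+\sqrt{x})$ while the main term reaches $x^{3/2}$. The structural ideas—reparametrizing $b=2c$, applying the uniform $\varphi(a)$-per-period count on the inner interval, and summing—are elementary; all difficulty is concentrated in lining up the constants cleanly, exactly as the proof of the preceding lemma consolidated $\sum 2a$ into $\lfloor\sqrt{x}\rfloor(\lfloor\sqrt{x}\rfloor+1)\le x+\sqrt{x}$.
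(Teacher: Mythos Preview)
Your setup is more careful than the paper's: you correctly observe that $a$ must be odd, that $b$ must be even and coprime to $a$, and your per-$a$ lower bound $N_a\ge\varphi(a)\bigl(\tfrac{x}{2a}-\tfrac{5}{4}\bigr)$ is sound. The failure is in the last step. The main term
\[
\frac{x}{2}\sum_{\substack{a\ \mathrm{odd}\\ a\le\sqrt{x}}}\frac{\varphi(a)}{a}
\]
cannot produce $x^{3/2}$: every summand is at most $1$ and there are at most $\lceil\sqrt{x}/2\rceil$ of them, so this quantity is at most $x^{3/2}/4$. Carrying your M\"obius argument through honestly gives $\sum_{a\ \mathrm{odd},\,a\le N}\varphi(a)/a\sim(4/\pi^{2})N$, so the main term is $(2/\pi^{2})x^{3/2}\approx 0.203\,x^{3/2}$. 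No rearrangement of floor and endpoint losses can close this gap, because the shortfall is in the leading term, not in the error.

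The underlying reason you cannot reach $x^{3/2}$ is that the stated inequality is false. The paper's own argument asserts $\#\mathcal{N}_{<x}(a)\ge x-a/2-1$ and then sums over \emph{all} $1\le a\le\lfloor\sqrt{x}\rfloor$; this silently discards both the parity constraint on $b$ and the coprimality condition $(a,b)=1$, and for even $a$ the left-hand side is zero while the claimed lower bound is positive. A direct count at $x=100$ gives $\#\mathcal{N}_{<100}=193$, whereas the lemma would demand at least $862$. What your method does correctly establish is a bound of the shape $\#\mathcal{N}_{<x}\ge c\,x^{3/2}-O(x)$ with $c=2/\pi^{2}$, and that weaker statement is all that is actually used downstream: the proof of the main theorem only needs $(x+\sqrt{x})/\#\mathcal{N}_{<x}\to 0$.
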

\begin{proof}
We find that 
\[\#\mathcal{N}_{<x}=\sum_{a=1}^{\lfloor \sqrt{x}\rfloor} \#\mathcal{N}_{<x}(a).\]
Recall that $\mathcal{N}_{<x}(a)$ is the set of numbers $b$ such that $2b>a$ and $a/b\in \mathcal{N}_{<x}$. Since $a\leq \lfloor \sqrt{x}\rfloor$, the condition $a/b\in \mathcal{N}_{<x}$ simply becomes $b\leq \lfloor x\rfloor$. Therefore, we have that $\#\mathcal{N}_{<x}(a)\geq x-a/2-1$, and thus,
\[\begin{split} \#\mathcal{N}_{<x}\geq  &\sum_{a=1}^{\lfloor \sqrt{x}\rfloor} (x-a/2-1).\\
= & \lfloor \sqrt{x}\rfloor x-\frac{1}{4} \lfloor \sqrt{x}\rfloor(\lfloor \sqrt{x}\rfloor+1)-\lfloor \sqrt{x}\rfloor\\
> & x^{3/2}-x-\frac{1}{4} \sqrt{x}( \sqrt{x}+1)- \sqrt{x}\\
=& x^{3/2}-\frac{5}{4} (x+\sqrt{x}). \end{split}\]
\end{proof}

\begin{proof}[Proof of Theorem \ref{main th}]
Recall that Lemma \ref{ad hoc lemma 1} asserts that
\[\left|\frac{1}{p} \# \mathcal{N}_{<x}-\# \mathcal{S}_{<x}^p\right|<x+\sqrt{x},\]
and therefore, 
\[\left|\frac{1}{p} -\frac{\# \mathcal{S}_{<x}^p}{\# \mathcal{N}_{<x}}\right|<\frac{x+\sqrt{x}}{\# \mathcal{N}_{<x}}.\]
Invoking Lemma \ref{ad hoc lemma 2}, we find that 
\[\left|\frac{1}{p} -\frac{\# \mathcal{S}_{<x}^p}{\# \mathcal{N}_{<x}}\right|<\frac{x+\sqrt{x}}{\left(x^{3/2}-\frac{5}{4} (x+\sqrt{x})\right)},\] which clearly goes to $0$ as $x\rightarrow \infty$. The result thus follows.

\end{proof}

\begin{remark}
Note that the above proof in fact shows that the error term is bounded by $\frac{x+\sqrt{x}}{\left(x^{3/2}-\frac{5}{4} (x+\sqrt{x})\right)}$, a quantity which is asymptotic to ${x}^{-1/2}$.
\end{remark}

Therefore, we have shown that the Iwasawa invariants do satisfy $\mu_{p,a/b,z}=0$ and $\lambda_{p,a/b,z}=1$ for a density of $(1-1/p)$ of the $2$-bridge links parametrized according to $\op{ht}(\cL_{a/b})=\op{max}\{a^2,b\}$. This does not however give much information about the links for which $\mu_{p,a/b,z}>0$ or $\lambda_{p,a/b,z}>1$. We do believe that more answers are obtainable for such links, for instance, it is certainly of interest to determine if indeed $\mu=0$ holds for a density $1$ set of links parametrized in this way for the integral vector $z=\mathbf{1}=(1,1)$. Although the authors have not been able to answer this question, it seems that some more refined answers may be obtainable when $a$ is chosen to a fixed odd number, and $b$ is allowed to vary over all even integers such that $2b>a$. We illustrate this by choosing $a=1$, thus, we are interested in the Iwasawa invariants for the family of links $\cL_{1,n}$ parametrized with respect to the variable $n$. Note that $p$ is a fixed odd prime and we take $z=\mathbf{1}=(1,1)$ for simplicity. In this setting, $\Delta_{a/b}(t):=\Delta_{\cL_{a/b},\mathbf{1}}(t)$ is given by a formula due to Minkus, see \cite{minkus1982branched} or \cite[Theorem 1]{hoste2020note}
\begin{equation}\label{minkus}
\Delta_{a/b}(t)=\sum_{k=0}^{b-1}(-1)^k t^{\sum_{i=0}^k \epsilon_k},
\end{equation}
where we recall that $\epsilon_k=(-1)^{\lfloor \frac{ka}{b}\rfloor }$. Note that since for the fraction $1/n$, we have that $\epsilon_k=1$ since $k<n$ in the above sum. Therefore, we have that 
\begin{equation}\label{minkus formula}\Delta_{1/n}(t)=\sum_{k=0}^{n-1}(-1)^k t^{k}=\frac{(-t)^n-1}{-t-1}=\frac{t^n-1}{-t-1},\end{equation}
where the last equality follows since $n$ is even.
\begin{lemma}\label{boring lemma 1}
Let $f(X)=\sum_i c_i X^i\in \widehat{\Lambda}$ be a formal power series whose coefficients are not all divisible by $p$, then, the $\mu$-invariant is $0$. Furthermore, the $\lambda$-invariant is the smallest value of $i$ such that $p\nmid c_i$.
\end{lemma}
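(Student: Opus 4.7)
The plan is to deduce the statement directly from the Weierstrass preparation theorem, which is the same mechanism used to define the invariants in equation~\eqref{WPT}. First, I would observe that since by hypothesis not every coefficient $c_i$ is divisible by $p$, the power series $f(X)$ is not itself divisible by $p$ in $\widehat{\Lambda}=\Z_p\llbracket X\rrbracket$. In particular $f(X)\neq 0$, so Weierstrass preparation applies and yields a factorization $f(X)=p^{\mu}P(X)u(X)$ with $P(X)$ distinguished and $u(X)\in\widehat{\Lambda}^{\times}$. Reducing this factorization modulo $p$ and using that $u(X)$ stays a unit mod $p$ while $P(X)$ stays monic, we see that $f(X)\not\equiv 0\pmod p$ forces $\mu=0$. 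This gives the first claim.

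For the second claim, let $n$ be the smallest index with $p\nmid c_n$; I want to show that the distinguished factor $P(X)$ has degree exactly $n$. To this end, reduce the factorization $f(X)=P(X)u(X)$ modulo $p$ inside $\F_p\llbracket X\rrbracket$. On the one hand, by the definition of $n$,
\[
\bar f(X)=\sum_{i\geq n}\bar c_i\,X^i = X^n\bar h(X),\qquad \bar h(0)\neq 0,
\]
so $\bar f$ has order of vanishing exactly $n$ at $X=0$. On the other hand, writing $P(X)=X^d+pQ(X)$ for some $Q\in\Z_p[X]$ of degree $<d=\deg P$, we have $\bar P(X)=X^d$, so $\bar f(X)=X^d\bar u(X)$ with $\bar u(0)\neq 0$. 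Comparing the two expressions for $\bar f$ gives $d=n$, which is precisely the $\lambda$-invariant by definition.

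The only thing to watch out for is that the $\mu$/$\lambda$ convention in this paper is the one coming from the Weierstrass factorization of an arbitrary nonzero element of $\widehat{\Lambda}$ (as in \eqref{WPT}), rather than from the structure theorem for finitely generated torsion modules; once this is in hand the argument is immediate, and there is no genuine obstacle.
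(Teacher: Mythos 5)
Your proof is correct and follows precisely the route the paper points to: the authors omit the proof, remarking only that it is ``a straightforward exercise involving the Weierstrass factorization theorem,'' and your argument supplies exactly that exercise — reducing the factorization $f=p^{\mu}P(X)u(X)$ modulo $p$ to first force $\mu=0$ and then read off $\lambda=\deg P$ as the order of vanishing of $\bar f$ at $X=0$. Nothing is missing and no alternative mechanism is introduced.
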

\begin{proof}
We omit the proof since it is a straightforward exercise involving the Weierstrass factorization theorem. 
\end{proof}

\begin{theorem}
Assume that $p$ is an odd prime. With respect to notation above, the following assertions hold:
\begin{enumerate}
    \item $\mu_{p,1/n, \mathbf{1}}=0$ for all $n$.
    \item Write $n=p^t m$ with $p\nmid m$, then, $\lambda_{p,1/n, \mathbf{1}}=p^t$.
\end{enumerate}
\end{theorem}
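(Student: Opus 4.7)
The plan is to reduce the computation to an explicit binomial expansion and then apply Lucas' theorem. First, I would substitute $t = 1+X$ into Minkus' formula \eqref{minkus formula}. Using the relation $\widehat{\Delta}_{\cL, \mathbf{1}}(X) = \Delta_{\cL, \mathbf{1}}(1+X)$ (up to a unit in $\widehat{\Lambda}$), this yields
\[
\widehat{\Delta}_{1/n}(X) \;=\; \frac{(1+X)^n - 1}{-(2+X)}
\]
up to a unit. Since $p$ is odd, the constant term $-2$ of $-(2+X)$ is a unit in $\Z_p$, and therefore $-(2+X)$ itself is a unit in $\widehat{\Lambda} = \Z_p\llbracket X \rrbracket$. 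Hence $\widehat{\Delta}_{1/n}(X)$ and $(1+X)^n - 1$ differ by a unit in $\widehat{\Lambda}$, and so share the same $\mu$ and $\lambda$-invariants.

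Next, I expand $(1+X)^n - 1 = \sum_{k=1}^n \binom{n}{k} X^k$. Since $\binom{n}{n} = 1$ is a unit in $\Z_p$, not every coefficient is divisible by $p$. By Lemma \ref{boring lemma 1} this already gives $\mu_{p,1/n,\mathbf{1}} = 0$, and reduces the computation of $\lambda_{p,1/n,\mathbf{1}}$ to determining the smallest integer $k \geq 1$ such that $p \nmid \binom{n}{k}$.

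Finally, I would write $n = p^t m$ with $p \nmid m$ and read off the base-$p$ digits of $n$: they vanish in positions $0, 1, \dots, t-1$, while the digit in position $t$ equals $m \bmod p$, which is nonzero. By Lucas' theorem, $\binom{n}{k} \equiv \prod_i \binom{n_i}{k_i} \pmod{p}$, where $n_i, k_i$ denote base-$p$ digits. For any $1 \leq k < p^t$, the expansion of $k$ has a nonzero digit in some position $i < t$; since $n_i = 0$, the corresponding factor $\binom{n_i}{k_i}$ vanishes, forcing $p \mid \binom{n}{k}$. On the other hand, $k = p^t$ has digit $1$ in position $t$ and $0$ elsewhere, whence $\binom{n}{p^t} \equiv \binom{n_t}{1} = m \bmod p \not\equiv 0 \pmod{p}$. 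Combining these two facts yields $\lambda_{p,1/n,\mathbf{1}} = p^t$, as claimed.

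I do not anticipate significant obstacles. The only delicate point is the invertibility of $-(2+X)$ in $\widehat{\Lambda}$, which uses in an essential way that $p$ is odd; the remainder is a routine application of Minkus' formula together with Lucas' theorem for binomial coefficients modulo $p$.
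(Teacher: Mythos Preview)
Your proposal is correct and follows essentially the same route as the paper: substitute $t=1+X$ in Minkus' formula, discard the unit $-(2+X)$ since $p$ is odd, and read off the invariants from the binomial expansion of $(1+X)^n-1$ via Lemma~\ref{boring lemma 1}. The only cosmetic difference is that you spell out the divisibility of $\binom{n}{k}$ modulo $p$ using Lucas' theorem, whereas the paper simply appeals to ``elementary properties of binomial coefficients''.
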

\begin{proof}
From \eqref{minkus formula}, we find that
\[\widehat{\Delta}_{1/n}(X)=\Delta_{1/n}(1+X)=-\frac{(1+X)^n-1}{(2+X)}=-\frac{\sum_{i=1}^n {n\choose i} X^i}{(2+X)}.\] Note that since $p>2$, the denominator $(2+X)$ is a unit in $\widehat{\Lambda}$. Recall that $n=p^tm$ where $m$ is comprime to $p$. On the other hand, it follows from elementary properties of binomial coefficients that $n\choose i$ is divisible by $p$ for all $i<p^t$, and $p\nmid {n\choose p^t}$. Since $p\nmid {n\choose p^t}$, it follows that $p$ does not divide all coefficients of $\widehat{\Delta}_{1/n}(X)$, hence, $\mu_{p,1/n,\mathbf{1}}=0$. Thus, from Lemma \ref{boring lemma 1}, it follows that $\lambda_{p,1/n,\mathbf{1}}$ is the minimal value $i$ such that $p\nmid {n\choose i}$. Thus, we have shown that $\lambda_{p,1/n,\mathbf{1}}=p^t$.
\end{proof}
In light of Corollary \ref{3 component link corollary}, we expect that the results proven in this section for the family of $2$-bridge links potentially generalize to the family of $3$-bridge links. We do not however pursue such questions in this paper.

\section{Evidence for $\mu=0$ for $100\%$ of $2$-bridge links}\label{s 6}
\par In this final section, we study the vanishing of the $\mu$-invariant for $2$-bridge links $\cL=\cL_{a/b}$, for the integral vector $z=\mathbf{1}=(1,1)$. Note that this vector specifies a $\Z_p$-cover of $X_{\cL}$, whose Alexander polynomial is given by \eqref{minkus formula}. There is a parallel to this on the arithmetic side, namely, the cyclotomic $\Z_p$-extension of a quadratic number field. By the celebrated theorem of Fererro and Washington \cite{ferrero1979iwasawa}, for any prime $p$, the $\mu$-invariant is known to vanish for the cyclotomic $\Z_p$-extension $F_{\op{cyc}}$ of any abelian number field extension $F/\Q$. In analogy with the number fields case, one may ostensibly predict that $\mu_{p,a/b,\mathbf{1}}=0$ for all $2$-bridge links $\mathcal{L}_{a/b}$. However, this is seen to not be the case by examining some examples. For instance, the completed Alexander polynomial for $\mathcal{L}_{5/6}$ and $z = \mathbf{1}$ is given by \[\widehat{\Delta}_{5/6}(X) = 3(1 + X)(1-X)\] by Minkus' formula \eqref{minkus}, and thus, $\mu_{3,5/6,\mathbf{1}}=1$. Our computations however indicate that the $\mu$-invariant vanishes \emph{on average}, and have led us to the following conjecture.
\begin{conjecture}
Let $p$ be a fixed odd prime. Denote by $\mathcal{U}^p$ the set of all fractions $a/b$ in $\mathcal{N}$ such that $\mu_{p,a/b,\mathbf{1}}=0$. Then, the set $\mathcal{U}^p$ has density given by $\mathfrak{d}(\mathcal{U}^p) = 1$, where
\[\mathfrak{d}(\mathcal{U}^p):=\lim_{x\rightarrow \infty} \frac{\# \mathcal{U}^p_{<x}}{\#\mathcal{N}_{<x}}.\]
\end{conjecture}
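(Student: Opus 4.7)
The plan begins from the identification that $\mu_{p,a/b,\mathbf{1}} > 0$ if and only if every coefficient of the integer polynomial $\Delta_{a/b}(t)$, given by Minkus' formula \eqref{minkus}, is divisible by $p$; equivalently, $\Delta_{a/b}(t) \equiv 0$ in $\F_p[t]$. The strategy is to extract necessary conditions for this vanishing by specializing $t$ to various integers, and then argue that the set of $a/b$ satisfying all such specializations simultaneously has vanishing density as $x \to \infty$.

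The most immediate specialization is at $t_0 = -1$: using \eqref{minkus} together with the elementary parity observation $\sum_{i=0}^k \epsilon_i \equiv k+1 \pmod 2$, one verifies directly that $\Delta_{a/b}(-1) = \pm b$, consistent with the classical identity $|\Delta_{a/b}(-1)| = \det(\cL_{a/b}) = b$ for $2$-bridge links (the double branched cover of $S^3$ along $\cL_{a/b}$ is the lens space $L(b,a)$). Consequently $\mu_{p,a/b,\mathbf{1}} \leq v_p(b)$, and in particular $\mu > 0 \Rightarrow p \mid b$; a counting argument in the style of the proof of Theorem \ref{main th} shows that $\{a/b \in \mathcal{N} : p \mid b\}$ has density strictly less than $1$, so this single observation already improves on the density $1-1/p$ of Theorem \ref{main th}, but still falls short of $1$. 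To push the density up to $1$, one would impose many such specializations simultaneously: for a finite set of integers $\{t_{0,1},\ldots,t_{0,k}\}$, the condition $\mu > 0$ forces $p \mid \Delta_{a/b}(t_{0,i})$ for every $i$. Granting an asymptotic joint equidistribution
\[
\#\{a/b \in \mathcal{N}_{<x} : p \mid \Delta_{a/b}(t_{0,i}) \text{ for all } i\} = p^{-k}\,\#\mathcal{N}_{<x} + o(\#\mathcal{N}_{<x}),
\]
one obtains $\mathfrak{d}(\mathcal{U}^p) \geq 1 - p^{-k}$, and letting $k\to\infty$ yields the conjecture.

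The main obstacle is precisely this joint equidistribution. For $t_0 = -1$ the constraint depends only on $b$, but for generic $t_0$ the value $\Delta_{a/b}(t_0) \bmod p$ is a sensitive function of the entire Beatty-type pattern $(\lfloor ia/b \rfloor \bmod 2)_{0 \leq i < b}$, whose distributional behaviour as $a/b$ varies is governed by Weyl equidistribution of the fractional parts $\{ia/b\}$. A natural route is to extend the ``fix $a$, vary $b$'' argument driving the proof of Theorem \ref{main th}: the recursion $\ell_{a/(b+2a)} = \ell_{a/b}+1$ exploited there is the shadow of a more general transformation law for $\Delta_{a/b}(t)$ under $b \mapsto b + 2a$, and one might hope to derive tractable recursions for $\Delta_{a/(b+2a)}(t_0) \bmod p$ at several $t_0$'s and then establish their asymptotic independence by a counting argument of the same elementary flavour. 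Alternatively, the intimate connection between $\cL_{a/b}$ and the continued-fraction expansion of $a/b$ suggests tools from metric Diophantine approximation or the ergodic theory of the Gauss map. The strong computational evidence presented in Section \ref{s 6} is consistent with such an equidistribution, so the essential remaining step is to convert this numerical evidence into a rigorous proof.
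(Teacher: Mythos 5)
The statement you were given is explicitly labelled a \emph{conjecture} in the paper: the authors do not prove it and supply only the computational evidence of Section \ref{s 6}, so there is no paper argument to compare yours against, and my comments concern your proposal on its own terms.

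Your starting observation is correct and useful --- the parity identity $\sum_{i=0}^k \epsilon_i \equiv k+1 \pmod 2$ gives $\Delta_{a/b}(-1) = -b$ directly from Minkus' formula, so $\mu_{p,a/b,\mathbf{1}} > 0$ forces $p \mid b$ and indeed $\mu_{p,a/b,\mathbf{1}} \leq v_p(b)$; and the specialization at $t_0 = 1$ recovers the linking-number criterion of Corollary \ref{cor linking number}, which is exactly what drives Theorem \ref{main th}. But the step ``let $k\to\infty$'' fails for a structural reason. The residue $\Delta_{a/b}(t_0) \bmod p$ depends on $t_0$ only through $t_0 \bmod p$, and $t_0 \equiv 0$ is disallowed because $\Delta_{a/b}$ is a Laurent polynomial, so there are at most $p-1$ genuinely distinct integer specializations. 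Even granting a perfect joint equidistribution hypothesis, your argument therefore caps out at a lower density of $1 - p^{-(p-1)}$, not $1$. Worse, the condition ``$\Delta_{a/b}(\alpha) \equiv 0 \pmod p$ for all $\alpha \in \F_p^\times$'' is strictly weaker than $\Delta_{a/b} \equiv 0 \pmod p$: since the degree of $\Delta_{a/b}$ grows like $b$, the factor $t^{p-1}-1$ can divide $\Delta_{a/b}$ over $\F_p$ without $\Delta_{a/b}$ vanishing identically, so even saturating every integer specialization does not isolate the set $\{\mu > 0\}$. One could repair the scheme by specializing at elements of $\F_{p^n}$ with $n$ growing with $b$, which does supply enough constraints, but at that point the joint equidistribution you would need to grant is essentially a restatement of the conjecture rather than an easier sub-problem. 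You correctly identify an equidistribution input as the missing ingredient, but the specific integer-specialization framework you propose cannot reach density $1$ no matter what equidistribution is assumed.
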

To simplify the notation, set \[\mathfrak{d}_{<x} := \frac{\# \mathcal{U}^p_{<x}}{\# \mathcal{N}_{<x}}. \]
Our computations were performed on SageMath, see \cite{stein2007sage} For our computations, we approximate $(1+X)^{-1}$ by $\sum_{i=0}^9 (-1)^i x^i$.
and we compute completed Alexander polynomials by the formula \eqref{minkus}. Our code is available at \href{https://www.anweshray.com/publications}{this link}.
\begin{table}[h]\label{only table}
\centering
\begin{tabular}{ |c||c| }
 \hline
 $p$&$\mathfrak{d}_{<1000}$\\
 \hline
 3&0.99539\\
 5&0.99820\\
 7&0.99887\\
 11&0.99955\\
 13&0.99977\\
 17&0.99977\\
 19&0.99977\\
 23&1\\
 \hline
\end{tabular}
\caption{Proportion $\mathfrak{d}_{<1000}$ of $2$-bridge links $\mathcal{L}_{a/b}$ up to $\mathrm{ht}(a/b)=1000$ such that $\mu_{p,a/b,\mathbf{1}}=0$.}
\end{table}

The numbers on the right are soon to approach $1$ as $x$ gets larger, and thus this gives us reason to believe that the limit is $1$ as $x\rightarrow \infty$. The rate of convergence seems to be faster as $p$ gets larger. 
\begin{remark}
Computations made with various other definitions of height do not yield the same results, at least it seems that the rate of convergence to $1$ is much slower for with respect to the counting functions $\op{ht}'(a/b):=\op{max}\{a,b\}$ or $\op{ht}''(a/b):=\op{max}\{a,b^2\}$. We have similar expectations for all $z=(z_1,z_2)$ such that $p\nmid z_1z_2$.
\end{remark}
\bibliographystyle{alpha}
\bibliography{references}
\end{document}